\documentclass[12pt]{amsart}
\usepackage{amsmath,amssymb,amsthm,xspace,a4wide,amscd}
\usepackage[all]{xy}
\usepackage{euscript}
\newtheorem{theorem}{Theorem}[section]

\newtheorem{remark}[theorem]{Remark}
\newtheorem{lemma}[theorem]{Lemma}
\newtheorem{proposition}[theorem]{Proposition}
\newtheorem{corollary}[theorem]{Corollary}

\usepackage{xcolor}


\title[Whittaker modules]{Affine Lie algebras representations induced from Whittaker modules}
\author[M.C.Cardoso]{Maria Clara Cardoso}
	\address{Instituto de Matem\'atica e Estat\'istica \\ Universidade de S\~ao Paulo \\ Sao Paulo \\ Brazil}
	\email{mariaclaracardoso23@gmail.com}
\author[V. Futorny]{Vyacheslav Futorny}
	\address{Instituto de Matem\'atica e Estat\'istica \\ Universidade de S\~ao Paulo \\ Sao Paulo \\ Brazil, and  International Center for Mathematics, SUSTech, Shenzhen, China}
	\email{futorny@ime.usp.br}


\date{\today}


\begin{document}

\begin{abstract}
We use induction from parabolic subalgebras with infinite-dimensional Levi factor to construct new families of irreducible representations for arbitrary Affine Kac-Moody algebra. Our first construction defines a functor from the category of Whittaker modules over the Levi factor of a parabolic subalgebra to the category of modules over the Affine Lie algebra. The second functor sends tensor products of a module over the affine part of  
 the Levi factor (in particular any weight module) and of a Whittaker module over the complement Heisenberg subalgebra to the Affine Lie algebra
modules. Both functors preserves irreducibility when the central charge is nonzero.
\medskip

\medskip

\medskip

\end{abstract}

\maketitle
\thispagestyle{empty}

\tableofcontents


\section{Introduction}
Induced representations play very important role in representation theory of Lie algebras as a main tool to construct new irreducible modules and to approach the problem of its classification.  Parabolic induction associated with a parabolic subalgebra of an  affine Kac-Moody algebra defines a functor from the category of modules over the Levi factor of the parabolic subalgebra to the category of induced modules for the affine Lie algebra. 
 In this paper we consider parabolic induction when a parabolic subalgebra has infinite-dimensional  Levi factor.
Study of such  induced modules goes back to \cite{JK}, \cite{F1}, \cite{F2}, \cite{BBFK}, \cite{FK1}, \cite{FK2}, \cite{FK3} and references therein. Their structure   is understood in the case when the central element acts as a nonzero scalar for some categories of modules over the Levi factor of a parabolic subalgebra.   In particular, in a recent paper \cite{GKMOS}, it was shown that parabolic induction preserves irreducibility for tensor weight  inducing modules with nonzero central charge. These modules are constructed as tensor product of a weight module over the "affine part" of the Levi factor and a weight module over its "Heisenberg part". Previously, particular cases were considered in \cite{BBFK}, 
 \cite{FK1}, \cite{FK2}, \cite{FK3}. Also, free field realizations of induced modules were obtained in \cite{JK}, \cite{C}, \cite{CF} and 
 \cite{FKS}.   In particular, the last paper provided a uniform construction of  free field realizations  for an arbitrary affine Kac--Moody algebra.

The purpose of  current paper is to extend the results of \cite{GKMOS} to the case when the inducing representations are
 Whittaker modules or mixed tensor modules of weight and Whittaker representations.

 Whittaker modules for finite-dimensional semisimple Lie algebras were introduced in \cite{Ko} and 
 corresponding parabolic induction (for parabolic subalgebras with finite-dimensional Levi factor)
 from irreducible Whittaker modules was considered in \cite{Mc} and \cite{MS}.  Whittaker modules for Affine Lie algebras were studied in \cite{ALZ} and \cite{GZ}. Unlike in the classical case the structure of Whittaker modules 
 is well understood only for Affine $\mathfrak{sl}(2)$. For example,  the universal non-degenerate Whittaker modules  with noncritical level and non-degenerate Whittaker character are irreducible as in the classical case, while the Whittaker modules with  degenerate Whittaker character and critical level are connected with the vertex theory \cite{ALZ}.
 The structure problem  for Whittaker modules
  remains open in general.

   A different family of \emph{imaginary} Whittaker modules for non-twisted affine Lie algebras was considered in \cite{Chr} for the simplest case of a parabolic subalgebra with an  infinite-dimensional  Levi factor which a sum the Heisenberg and Cartan subalgebras. The inducing modules are Whittaker modules for the Heisenberg subalgebra. When the central charge is nonzero corresponding induced modules are irreducible. 
   
   We consider arbitrary parabolic subalgebra with infinite-dimensional  Levi factor and consider two induction functors: 
   from the category of  Whittaker modules over the Levi factor and from the category of mixed tensor modules. In both cases we show that the functors preserve irreducibility when the central charge is nonzero. This allows to construct 
   new families of irreducible representations for all Affine Lie algebras.   
   
   We briefly describe our main results. 
    Let $\widehat{\mathfrak g}$ be an  affine Kac-Moody algebra with a Cartan subalgebra $H$ and  $\widehat{\mathfrak p}$  a parabolic subalgebra $\widehat{\mathfrak g}$  with
 the Levi decomposition 
$$\widehat{\mathfrak p} =\widehat{\mathfrak l}\oplus  \widehat{\mathfrak u}_+,$$
  where $\widehat{\mathfrak l}$ is an infinite-dimensional  Levi factor.  For  a $\widehat{\mathfrak p}$-module $V$ such that $\widehat{\mathfrak u}_+V=0$ we consider 
 a $\widehat{\mathfrak p}$-induced  $\widehat{\mathfrak g}$-module
 $M_{\widehat{\mathfrak p}}(V)$. In this paper we determine the irreducibility of $M_{\widehat{\mathfrak p}}(V)$ for several families of irreducible $\widehat{\mathfrak p}$-modules $V$ with nonzero central charge, including \emph{imaginary Whittaker modules} and \emph{generalized imaginary Whittaker modules}. 
 
 Let $\widehat{\mathfrak l}_1$ be a Lie subalgebra of $\widehat{\mathfrak l}$ generated by the  root subspaces of 
 $\widehat{\mathfrak l}$ and by $d$, $H_{\mathfrak l}$ is its Cartan subalgebra,
 and $\mathfrak h_{\mathfrak l}$ is the finite dimensional part  of $H_{\mathfrak l}$. 
 Then 
 $\widehat{\mathfrak l}=\widehat{\mathfrak l}_1\oplus \mathfrak h_{\mathfrak l}^{\perp}$,   where $\mathfrak h_{\mathfrak l}^{\perp}\subset H$  is the orthogonal complement of $\mathfrak h_{\mathfrak l}$ with respect to the Killing form. Hence
$[\widehat{\mathfrak l}_1, \mathfrak h_{\mathfrak l}^{\perp}]=0$ and $(H\cap \widehat{\mathfrak l}_1)\cup \mathfrak h_{\mathfrak l}^{\perp}=H$.  

If  $\widehat{\mathfrak l}_1$ contains nonzero real root subspaces of 
 $\widehat{\mathfrak l}$ then 
assume that $\widehat{\mathfrak l}_1$ is the Lie subalgebra of $\widehat{\mathfrak l}$ generated by them   and by $d$, or equivalently
$\widehat{\mathfrak l}_1$ is the affinization (twisted or untwisted) of a semisimple Lie subalgebra $\mathfrak l_1$. In this case the Levi subalgebra $\widehat{\mathfrak l}$ does not necessarily contain the whole Heisenberg subalgebra. 

We summarize our first main result:

\medskip

\begin{theorem}\label{thm-I}   Let  $\widehat{\mathfrak l}=\widehat{\mathfrak l}_1\oplus \mathfrak h_{\mathfrak l}^{\perp}$,   $a\in\mathbb C\setminus \{0\}$, 
 $\lambda\in (\mathfrak h_{\mathfrak l}^{\perp})^*$, $\eta: U(\widehat{\mathfrak l}_1^+)\rightarrow \mathbb C$ a nonzero  homomorphism and $\widehat{W}_{\eta, a}$ an irreducible Whittaker  $\widehat{\mathfrak l}_1$-module    of type $\eta$, central charge $a$ and with the action of $\mathfrak h_{\mathfrak l}^{\perp}$ defined by $\lambda$.
  Then the induced generalized imaginary Whittaker
 $\widehat{\mathfrak g}$-module $M_{\widehat{\mathfrak p}}(\widehat{W}_{\eta, a})$ is irreducible. In particular, if 
 $\widehat{\mathfrak l}=G+ H$, where $G$ is the Heisenberg subalgebra of $\widehat{\mathfrak g}$,  $\eta: U(G_+)\rightarrow \mathbb C$  an algebra homomorphism such that $\eta |_{\widehat{\mathfrak g}_{n\delta}}\neq 0$ for infinitely many integer $n>0$ and $\widetilde{M}_{\eta, a}$  the Whittaker $G\oplus \mathbb C d$-module of type $\eta$ and central charge $a$, then the imaginary Whittaker
 $M_{\widehat{\mathfrak p}}(\widetilde{M}_{\eta, a})$ is irreducible.
\end{theorem}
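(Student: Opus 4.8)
The plan is to establish the general statement and then read off the imaginary Whittaker case. By the Poincaré--Birkhoff--Witt theorem applied to $\widehat{\mathfrak g}=\widehat{\mathfrak u}_-\oplus\widehat{\mathfrak l}\oplus\widehat{\mathfrak u}_+$, together with $\widehat{\mathfrak u}_+\widehat{W}_{\eta,a}=0$, there is a vector space isomorphism $M_{\widehat{\mathfrak p}}(\widehat{W}_{\eta,a})\cong U(\widehat{\mathfrak u}_-)\otimes\widehat{W}_{\eta,a}$, free over $U(\widehat{\mathfrak u}_-)$ and generated by the top space $1\otimes\widehat{W}_{\eta,a}$. Since $\mathfrak h_{\mathfrak l}^{\perp}$ acts by the scalars $\lambda$ and $\widehat{W}_{\eta,a}$ is irreducible over $\widehat{\mathfrak l}_1$, it is irreducible over $\widehat{\mathfrak l}$; as $\widehat{\mathfrak l}$ preserves $1\otimes\widehat{W}_{\eta,a}$, any nonzero $\widehat{\mathfrak g}$-submodule $N$ meeting $1\otimes\widehat{W}_{\eta,a}$ must contain all of it, whence $N\supseteq U(\widehat{\mathfrak u}_-)(1\otimes\widehat{W}_{\eta,a})=M_{\widehat{\mathfrak p}}(\widehat{W}_{\eta,a})$. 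Thus everything reduces to the single claim that $N\cap(1\otimes\widehat{W}_{\eta,a})\neq 0$ for every nonzero submodule $N$.

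To prove this claim I would grade $U(\widehat{\mathfrak u}_-)$ by the monoid generated by the negative roots occurring in $\widehat{\mathfrak u}_-$, fix a nonzero $v=\sum_X X\otimes w_X\in N$ written in a PBW basis, and work with the finite set of $\widehat{\mathfrak u}_-$-degrees in its support. The engine is a \emph{peeling} operation: choosing a root vector $f_\gamma$ that occurs as a factor of a maximal monomial $X$, I apply a positive root vector $e_\beta\in\widehat{\mathfrak u}_+$, so that $e_\beta(X\otimes w_X)=[e_\beta,X]\otimes w_X$ because $e_\beta\widehat{W}_{\eta,a}=0$. In the imaginary case $\widehat{\mathfrak l}=G+H$ one has $\widehat{\mathfrak l}_1^+=G_+$, and $[e_\beta,f_\gamma]$ lands in $\widehat{\mathfrak l}$ precisely when $\beta+\gamma$ is imaginary: writing $\gamma=-\alpha+l\delta$ one takes $\beta=\alpha+m\delta$ with $m>\max(l,0)$, a genuine positive real root, so that $[e_\beta,f_\gamma]\in\widehat{\mathfrak g}_{k\delta}\subseteq G_+$ with $k=m+l>0$, and this element acts on $\widehat{W}_{\eta,a}$ by the scalar $\eta$. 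The hypothesis that $\eta|_{\widehat{\mathfrak g}_{n\delta}}\neq 0$ for infinitely many $n>0$ is used exactly here to select $k$ so that the peeled leading contribution, in which the factor $f_\gamma$ has been removed, carries a nonzero coefficient. Iterating the peeling and descending on the support, I arrive at a nonzero element of $1\otimes\widehat{W}_{\eta,a}$.

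The main obstacle is precisely this descent, and it is delicate because $\widehat{W}_{\eta,a}$ is a Whittaker and not a weight module: there is no $d$-eigenspace or imaginary-weight decomposition to separate the infinitely many $\delta$-levels contributing to a fixed finite $\mathfrak h$-weight, so one cannot finish by comparing weights. The substitute for weight semisimplicity is the nondegeneracy furnished by $a\neq 0$ (which makes $\widehat{W}_{\eta,a}$ irreducible and the Heisenberg action nondegenerate) together with the infinitude of nonzero values of $\eta$ on positive imaginary root spaces, which guarantees that for every factor to be peeled there is an admissible $e_\beta$ producing a nonzero scalar, so the procedure never stalls at any depth. The genuinely technical part is the bookkeeping of the lower-order terms created en route --- commutators yielding Cartan elements, which act by $\lambda$, or producing shorter monomials --- and the choice of an ordering on $\widehat{\mathfrak u}_-$-degrees fine enough that the peeled leading term cannot be cancelled; I expect this to be where most of the work lies. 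For the general statement with real roots in $\widehat{\mathfrak l}_1$ the same scheme applies, with the action of $\widehat{\mathfrak l}_1^+$ by $\eta$ (and of the Heisenberg part of $\widehat{\mathfrak l}_1$ via $a\neq 0$) replacing the purely imaginary peeling.

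Finally, the \emph{In particular} assertion is the specialization $\widehat{\mathfrak l}_1=G\oplus\mathbb C d$, so that $\widehat{\mathfrak l}_1^+=G_+$ and the inducing Whittaker module is $\widetilde{M}_{\eta,a}$. It only remains to note that $\widetilde{M}_{\eta,a}$ is an irreducible Whittaker $G\oplus\mathbb C d$-module of type $\eta$ and central charge $a$, which is the standard irreducibility of a module induced from a nondegenerate character of the positive part of a Heisenberg algebra with nonzero central charge, valid under the assumption that $\eta|_{\widehat{\mathfrak g}_{n\delta}}\neq 0$ for infinitely many $n>0$. The general statement then yields the irreducibility of the imaginary Whittaker module $M_{\widehat{\mathfrak p}}(\widetilde{M}_{\eta,a})$.
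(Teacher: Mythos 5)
Your reduction to the claim that every nonzero submodule meets $1\otimes\widehat{W}_{\eta,a}$ is correct and matches the paper, but the engine you propose for the descent does not work. The peeling step rests on the assertion that $[e_\beta,f_\gamma]\in\widehat{\mathfrak g}_{k\delta}$ with $k>0$ ``acts on $\widehat{W}_{\eta,a}$ by the scalar $\eta$.'' Positive imaginary root vectors act by $\eta$ only on the cyclic Whittaker generator, not on an arbitrary $w_X\in\widehat{W}_{\eta,a}$: for $w_X=u\,v_\eta$ with $u\in U(G_-)$ one has $x\,w_X=[x,u]v_\eta+\eta(x)w_X$, and the first term is neither zero nor proportional to $w_X$ in general, so the ``peeled leading contribution with a nonzero coefficient'' does not materialize. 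Worse, for the general statement (real roots present in $\widehat{\mathfrak l}_1$) every positive imaginary root space of $\widehat{\mathfrak l}_1$ lies in $[\widehat{\mathfrak l}_1^+,\widehat{\mathfrak l}_1^+]$, on which $\eta$ vanishes identically, so the scalar you would extract is $0$ even on the generator. You are also misattributing the role of the hypothesis $\eta|_{\widehat{\mathfrak g}_{n\delta}}\neq 0$ for infinitely many $n$: in the paper it is used only to make the \emph{inducing} module $\widetilde{M}_{\eta,a}$ irreducible (Proposition \ref{prop-whitt-graded}); it plays no role in the descent inside the induced module.

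The paper's actual mechanism is in the opposite direction. After inducting on $\tau(v)$ (with the case $\tau(v)>1$ handled by the argument of \cite[Lemma 5.3]{BBFK}), in the critical case $\tau(v)=1$ one chooses $u_N\in\widehat{\mathfrak g}_{\beta+\alpha_1+(-N-k_1)\delta}$ so that the commutator with the distinguished leading monomial lands in a \emph{negative} imaginary root space $\widehat{\mathfrak g}_{-N\delta}$. Nonvanishing of $x_{-N}w_1$ is then obtained from Lemma \ref{lem-whitt} (torsion-freeness of $G(\widehat{\mathfrak l})_-$ on $\widehat{W}_{\eta,a}$, which is where $a\neq 0$ enters) together with the pairing $[x_N^{\mathfrak u},x_{-N}^{\mathfrak u}]=c$ acting by $a$; and $u_N v\neq 0$ is forced by applying a suitable $x_N\in\widehat{\mathfrak g}_{N\delta}$ and again using $c\mapsto a\neq 0$ plus a linear-independence argument on the remaining terms. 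None of this is present in your sketch, and the part you defer as ``bookkeeping'' is precisely the content of the proof; as written, your descent stalls at the first application of $e_\beta$ to a non-generator vector.
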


\medskip

The first part of the theorem is proved in Theorem \ref{thm-main-2}, while the second part is Theorem \ref{thm-imag-Heis}.  The second part generalizes the results of \cite{Chr} to all affine Lie algebras. Theorem \ref{thm-I} produces new families of irreducible modules for affine Kac-Moody algebras from irreducible Whittaker modules. 
Moreover, in the case of an non-twisted Affine Lie algebra, this construction can be extended to the inducing modules $({W}_{\eta, a}\otimes E(\mu, a_1, \ldots, a_m))\otimes \mathbb C [d]$, where  ${W}_{\eta, a}$ is an irreducible Whittaker module of type $\eta$ and central charge $a$ over the central extension of the loop algebra and $E(\mu, a_1, \ldots, a_m)$ is the evaluation module (see Theorem \ref{thm-main-3}). 

\medskip

Next we consider induced modules from mixed tensor $\widehat{\mathfrak l}$-modules obtained as a tensor product of irreducible  modules for affine Lie subalgebras and  Whittaker modules for  Heisenberg subalgebras. The case when both components of the tensor module (over the affine subalgebra and the Heisenberg subalgebra) are weight modules was considered in  \cite{GKMOS}.

Let $\widehat{\mathfrak p} =\widehat{\mathfrak l}\oplus  \widehat{\mathfrak u}_+$ be a parabolic subalgebra of $\widehat{\mathfrak g} $ containing the whole Heisenberg subalgebra. Then
$$\widehat{\mathfrak l} =(\widetilde{\mathfrak l}_0+ \widetilde{G}(\widehat{\mathfrak l})^{\perp})\oplus \mathfrak h_{\mathfrak l}^{\perp},$$ where $\widetilde{\mathfrak l}_0=\widehat{\mathfrak l}_0\oplus \mathbb C d$ is an Affine subalgebra  of $\widehat{\mathfrak l}$ generated by the 
real root subspaces and extended by the derivation $d$ 
and $\widetilde{G}(\widehat{\mathfrak l})^{\perp}$ is the orthogonal complement of the Heisenberg subalgebra of $\widetilde{\mathfrak l}_0$ extended by the derivation. Consider the natural triangular decomposition 
$$\widetilde{G}(\widehat{\mathfrak l})^{\perp}=\widetilde{G}(\widehat{\mathfrak l})^{\perp}_-\oplus (\mathbb C c\oplus \mathbb C d)\oplus \widetilde{G}(\widehat{\mathfrak l})^{\perp}_+,$$
where $\widehat{\mathfrak g}_{k\delta}\cap \widetilde{G}(\widehat{\mathfrak l})^{\perp}\subset \widetilde{G}(\widehat{\mathfrak l})^{\perp}_{\pm} $ if and only if $k\in \mathbb Z_{\pm}$.

Our second main result is the following (see Theorem \ref{thm-main}).

\begin{theorem}\label{thm-II} 
\begin{itemize}
\item Let   $\lambda\in (\mathfrak h_{\mathfrak l}^{\perp})^*$, $a\in\mathbb C\setminus \{0\}$, $M$  an   $\widehat{\mathfrak l}_0$-module  with  central charge $a$.  Let $\eta: U(\widetilde{G}(\widehat{\mathfrak l})^{\perp}_+)\rightarrow \mathbb C$ be an algebra homomorphism, 
 and
  $S_{\eta, a}$ a Whittaker $ \widetilde{G}(\widehat{\mathfrak l})^{\perp}$-module of type $\eta$ and central charge $a$.
Consider a mixed tensor $\widehat{\mathfrak l}$-module $M\otimes S_{\eta, a}$ on which 
 $\mathfrak h_0^{\perp}$ acts via $\lambda$ and $d$ has a tensor product action. Then the induced 
 $\widehat{\mathfrak g}$-module $$M_{\widehat{\mathfrak p}}(M\otimes S_{\eta, a})=U(\widehat{\mathfrak g})\otimes_{U(\widehat{\mathfrak p})} (M\otimes S_{\eta, a})$$ is irreducible if and only if $M\otimes S_{\eta, a}$
is irreducible $\widehat{\mathfrak l}$-module.
\item Let $M$ be a weight  $\widetilde{\mathfrak l}_0$-module which is irreducible as $\widehat{\mathfrak l}_0$-module  with  central charge $a$.  If  $\eta |_{\widetilde{G}(\widehat{\mathfrak l})^{\perp}_+\cap \widehat{\mathfrak g}_{n\delta}}\neq 0$ for infinitely many integers $n>0$ then the induced 
 $\widehat{\mathfrak g}$-module $$M_{\widehat{\mathfrak p}}(M\otimes S_{\eta, a})=U(\widehat{\mathfrak g})\otimes_{U(\widehat{\mathfrak p})} (M\otimes S_{\eta, a})$$ is irreducible.
 \end{itemize}
\end{theorem}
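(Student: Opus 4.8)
The plan is to work on the PBW realization $M_{\widehat{\mathfrak p}}(M\otimes S_{\eta,a}) \cong U(\widehat{\mathfrak u}_-)\otimes_{\mathbb C}(M\otimes S_{\eta,a})$, graded by the quotient $Q/Q_{\widehat{\mathfrak l}}$ of the root lattice $Q$ by the sublattice $Q_{\widehat{\mathfrak l}}$ spanned by the roots of $\widehat{\mathfrak l}$. Since $\widehat{\mathfrak p}$ contains the whole Heisenberg subalgebra, $Q_{\widehat{\mathfrak l}}$ contains $\delta$ and all real roots of $\widehat{\mathfrak l}_0$, and $\widehat{\mathfrak u}_{\pm}$ consist of the remaining real root spaces $\widehat{\mathfrak g}_{\pm\alpha+n\delta}$. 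With respect to this grading $\widehat{\mathfrak l}$ acts in degree $0$, elements of $\widehat{\mathfrak u}_+$ strictly raise degree, elements of $\widehat{\mathfrak u}_-$ strictly lower it, and the degree-$0$ component is exactly $1\otimes(M\otimes S_{\eta,a})$. The forward implication of the first part is then immediate: a proper $\widehat{\mathfrak l}$-submodule $V'\subsetneq M\otimes S_{\eta,a}$ induces the proper $\widehat{\mathfrak g}$-submodule $M_{\widehat{\mathfrak p}}(V')$, so irreducibility of $M_{\widehat{\mathfrak p}}(M\otimes S_{\eta,a})$ forces that of $M\otimes S_{\eta,a}$.

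For the converse I would prove that every nonzero submodule $N$ meets the degree-$0$ component. Given $0\neq v\in N$, let $v_{\bar\gamma}$ be its top graded component (the one of maximal, i.e.\ least negative, degree $\bar\gamma$). If $\bar\gamma\neq 0$ I would exhibit a raising operator $x_{\alpha+m\delta}\in\widehat{\mathfrak u}_+$ whose action does not annihilate $v_{\bar\gamma}$ in top degree; since the resulting top component of $x_{\alpha+m\delta}v$ sits in the strictly higher degree $\bar\gamma+\bar\alpha$, iterating raises the top degree until it reaches $0$, producing a nonzero element of $N\cap(1\otimes(M\otimes S_{\eta,a}))$. Once this intersection is nonzero, the $\widehat{\mathfrak l}$-irreducibility of $M\otimes S_{\eta,a}$ gives $1\otimes(M\otimes S_{\eta,a})\subseteq N$, and applying $U(\widehat{\mathfrak u}_-)$ yields $N=M_{\widehat{\mathfrak p}}(M\otimes S_{\eta,a})$.

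The hard part is this non-vanishing claim, and it is exactly where $a\neq 0$ is used. Writing $v_{\bar\gamma}=\sum_i u_i\otimes m_i$ with $u_i$ PBW monomials in the root vectors $x_{-\alpha-n\delta}$, the top-degree part of $x_{\alpha+m\delta}v_{\bar\gamma}$ is obtained by contracting $x_{\alpha+m\delta}$ against one such factor, and is built from the action on the $M\otimes S_{\eta,a}$-factor of the commutators $[x_{\alpha+m\delta},x_{-\alpha-n\delta}]$, which equal $h_\alpha+m\nu\,c$ (acting as $h_\alpha+m\nu a$) when $m=n$ and the imaginary Heisenberg vector $h_\alpha\otimes t^{m-n}\in\widehat{\mathfrak g}_{(m-n)\delta}$ when $m\neq n$, where $\nu\neq 0$. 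As $m$ ranges over $\mathbb Z$ this produces a family of degree-preserving operators on $M\otimes S_{\eta,a}$ whose diagonal parts are shifted by the infinitely many distinct scalars $m\nu a$ and whose off-diagonal parts run through the whole positive and negative Heisenberg. The point is that $a\neq 0$ makes this family nondegenerate: a fixed nonzero top component cannot lie in the kernel of $h_\alpha+m\nu a$ for all $m$, nor be killed by all the Heisenberg shifts, so some $m$ gives $x_{\alpha+m\delta}v_{\bar\gamma}\neq 0$. Making this precise — straightening the lower-order correction terms and comparing the operators across different $m$ — is the technical core, and it degenerates precisely at $a=0$.

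For the second part I would reduce to the first by checking that its hypotheses force $M\otimes S_{\eta,a}$ to be $\widehat{\mathfrak l}$-irreducible. Here $M$ is a weight $\widetilde{\mathfrak l}_0$-module that is irreducible over $\widehat{\mathfrak l}_0$, while the condition $\eta|_{\widetilde{G}(\widehat{\mathfrak l})^{\perp}_+\cap\widehat{\mathfrak g}_{n\delta}}\neq 0$ for infinitely many $n>0$, together with $a\neq 0$, guarantees by the structure theory of non-degenerate Whittaker modules over a Heisenberg algebra that $S_{\eta,a}$ is irreducible over $\widetilde{G}(\widehat{\mathfrak l})^{\perp}$ with $\widetilde{G}(\widehat{\mathfrak l})^{\perp}_-$ acting freely on the Whittaker vector. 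I would then deduce irreducibility of the tensor product by a separation-of-variables argument: the weight grading of the $M$-factor is preserved by the commuting $\widetilde{G}(\widehat{\mathfrak l})^{\perp}$-action, and the infinitely many nonvanishing Whittaker components rule out any proper $\widehat{\mathfrak l}$-invariant subspace, exactly the nondegeneracy exploited above. With $\widehat{\mathfrak l}$-irreducibility of $M\otimes S_{\eta,a}$ in hand, the first part applies and gives the claim.
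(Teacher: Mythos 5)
Your overall architecture coincides with the paper's: the forward implication via induction from a proper submodule, the reduction of a nonzero submodule to the degree-zero component $1\otimes(M\otimes S_{\eta,a})$ by raising operators (the paper organizes this as an induction on the number $\tau(v)$ of simple roots outside $\widehat{\mathfrak l}$, citing \cite[Lemma 5.3]{BBFK} for the step $\tau(v)>1$), and the derivation of the second item from the first by proving $\widehat{\mathfrak l}$-irreducibility of $M\otimes S_{\eta,a}$ (Proposition \ref{prop-constr-irr}). However, the step you yourself label ``the technical core'' --- showing that some raising operator does not annihilate the top component --- is exactly the content of the theorem and is not actually carried out in your proposal; as written there is a genuine gap there, and the mechanism you sketch is not quite the one that closes it. The difficulty is that the top-degree part of $x_{\alpha+m\delta}v_{\bar\gamma}$ is a \emph{sum} of contractions over all factors of all monomials $u_i$, so it is not enough that a fixed vector cannot lie in the kernel of $h_\alpha+m\nu a$ for all $m$: one must rule out cancellation among the many terms, and the pencil $\{h_\alpha+m\nu a\}_m$ by itself does not do that.

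The paper's resolution is worth contrasting with your sketch. In the case $\tau(v)=1$ one picks the index $i_0$ with the largest $\delta$-degree $k_{i_0}$ and applies a single element $u\in\widehat{\mathfrak g}_{\varphi_{i_0}-(N+k_{i_0})\delta}$ with $N\gg 0$, so that $u\cdot v$ lands entirely in $M\otimes S_{\eta,a}$ and the contraction against $u_{i_0}$ produces a factor $y_{-N}\in\widehat{\mathfrak g}_{-N\delta}$ while all other imaginary contributions sit in degrees $-N-s_j$ with $s_j>0$. One then detects this distinguished term by applying $y_N^{(2)}\in G(\widehat{\mathfrak l})^{\perp}_+$ with $[y_N^{(2)},y_{-N}^{(2)}]=Nc$: because $[\widehat{\mathfrak g}_{N\delta},\widehat{\mathfrak g}_{-N-s_j\,\delta}]=0$ for $s_j\neq 0$ and $y_N^{(2)}$ acts by the scalar $\eta(y_N^{(2)})$ on the relevant Whittaker vectors, one gets $y_N^{(2)}(u\cdot v)=\eta(y_N^{(2)})(u\cdot v)+Na\sum_{t}v_t\otimes w_t$, so $u\cdot v=0$ would force a linear dependence among the original summands. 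Note that this argument uses the Whittaker property of $S_{\eta,a}$ (positive Heisenberg acting by scalars) and the torsion-freeness of $G_-$ on Whittaker modules with $a\neq 0$ (Proposition \ref{prop-whitt-notgraded}) in an essential way \emph{already in the first item}, not only in the second; your proposal invokes the Whittaker structure only when treating the second item. If you replace your ``nondegenerate family of operators'' heuristic by this large-$N$ isolation argument, your proof becomes the paper's proof.
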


We note that the first item of Theorem \ref{thm-II} is a very general statement which a priori does not impose any conditions on  $\widehat{\mathfrak l}_0$-module $M$. This allows to construct a large family of new irreducible representations for all Affine Lie algebras. In particular, Theorem \ref{thm-II} extends Theorem \ref{thm-I} to arbitrary parabolic subalgebras with infinite dimensional Levi factors.

\medskip

\section{Preliminaries}


\subsection{Affine Kac-Moody algebras}
Let $\widehat{\mathfrak g}$ be an  affine Kac-Moody algebra with a Cartan subalgebra $H$ and the root
system $\Delta$ of $\widehat{\mathfrak g}$. For a root $\alpha$ denote 
$$\widehat{\mathfrak g}_\alpha = \{ x \in \widehat{\mathfrak g} \, | \, [h, x] = \alpha(h) x
{\text{ for every }} h \in H\}.$$ Then $\widehat{\mathfrak g}$ has the  root
decomposition
$$
\widehat{\mathfrak g} = H \oplus (\oplus_{\alpha \in \Delta}
\widehat{\mathfrak g}_\alpha).
$$

 The set of all imaginary roots is $\Delta^{im}=\{k\delta |
k\in\mathbb Z\setminus\{0\}\}$.  

Let ${\mathfrak g}$ be  an underlined finite-dimensional simple Lie algebra of $\widehat{\mathfrak g}$ and ${\mathfrak h}\subset H$  a Cartan subalgebra of ${\mathfrak g}$. Then the real roots of $\widehat{\mathfrak g}$ 
 have the form $\phi+n\delta$ for some $\phi$  spanned  by the roots of ${\mathfrak g}$ and some $n\in \mathbb Z$.
We also have 
$H={\mathfrak h}\oplus \mathbb C c\oplus C d$,  where $c$ is a central element and $d\in H$ is  such that $\delta(d)=1$ (derivation of $\widehat{\mathfrak g}$).
 For any $x\in \widehat{\mathfrak g}_{\phi+n\delta}$ we have
 $[d,x]=nx$.

The Heisenberg subalgebra  $G\subset \widehat{\mathfrak g}$ is defined as follows: 
 $$G= \oplus_{k \in \mathbb Z \setminus \{0\}} \widehat{\mathfrak g}_{k\delta} \oplus \mathbb C c.$$ 
We have $G = G_- \oplus \mathbb C c \oplus G_+$, where
$G_{\pm} = \oplus_{k>0} \widehat{\mathfrak g}_{\pm k \delta}$.

Let  $\widehat{\mathfrak p}$  be a parabolic subalgebra $\widehat{\mathfrak g}$  with
 the Levi decomposition 
$$\widehat{\mathfrak p} =\widehat{\mathfrak l}\oplus  \widehat{\mathfrak u}_+,$$
  where $\widehat{\mathfrak l}$ is the Levi factor and $\widehat{\mathfrak u}_+$ is the radical.    
  
   Denote by $ \widehat{\mathfrak u}_-$ the opposite radical, that is if $\widehat{\mathfrak g}_{\alpha}\subset \widehat{\mathfrak u}_+$ then $\widehat{\mathfrak g}_{-\alpha}\subset \widehat{\mathfrak u}_-$. Then we have  $\widehat{\mathfrak g}= \widehat{\mathfrak p} \oplus \widehat{\mathfrak u}_-$.

 Let  $V$ an $\widehat{\mathfrak l}$-module.  Then we have   
   a $\widehat{\mathfrak p}$-module structure on $V$ by setting $\widehat{\mathfrak u}_+V=0$. 
 Define an induced  $\widehat{\mathfrak g}$-module
 $$M_{\widehat{\mathfrak p}}(V)=U(\widehat{\mathfrak g})\otimes_{U(\widehat{\mathfrak p})}V,$$
 which is isomorphic to 
 $U(\widehat{\mathfrak u}_-)\otimes V$ as a vector space, where $\widehat{\mathfrak u}_-$ is the opposite radical of $\widehat{\mathfrak p}$.

  If $V$ is an irreducible $\widehat{\mathfrak g}$-module then the action of $c$ on $V$ is scalar and its value is called the \emph{central charge} of $V$.
  
   A $\widehat{\mathfrak g}$-module $V$ is called \emph{weight} (with respect to the Cartan subalgebra $H$) if 
  
  \begin{equation*}
M=\bigoplus_{\lambda \in{H^*}}M_{\lambda},
\end{equation*}

where $M_{\lambda}= \{v \in M | \,   hv=\lambda(h)v, \, \forall h \in H^*\}$.
  
  If $\mathfrak a\subset \widehat{\mathfrak g}$ is a Lie subalgebra then weight structure on $\mathfrak a$-modules is determined  with respect to $\mathfrak a\cap H$.
   
  \medskip

\subsection{Whittaker modules for Affine Lie algebras}
  
  Consider the triangular  decomposition $\widehat{\mathfrak g}=\widehat{\mathfrak g}_-\oplus H\oplus \widehat{\mathfrak g}_+$, where $\widehat{\mathfrak g}_{\pm}=\sum_{\alpha\in \Delta_{\pm}}\widehat{\mathfrak g}_{\alpha}$ and $\Delta_{\pm}$ are positive and negative roots. 
  
  Let $\eta: U(\widehat{\mathfrak g}_+)\rightarrow \mathbb C$ be an algebra homomorphism with $\eta(\widehat{\mathfrak g}_+)\neq 0$.  Then  $\eta ([\widehat{\mathfrak g}_+, \widehat{\mathfrak g}_+])=0$. In particular, $\eta (\widehat{\mathfrak g}_{k\delta})=0$ for all $k>0$. 
  
  A $\widehat{\mathfrak g}$-module $V$ is a \emph{Whittaker module of type $\eta$} if $V$ is generated   by a \emph{Whittaker element} $v$ such that
$xv=\eta(x)v$ for any $x\in U(\widehat{\mathfrak g}_+)$.

For $a\in\mathbb C$ consider a $1$-dimensional $\widehat{\mathfrak g}_+\oplus \mathbb C c$-module $\mathbb C v$ such that $v$ is a Whittaker element of type $\eta$ and $cv=a v$. Define the following   \emph{universal}  Whittaker module of type $\eta$ and central charge $a$:
$$V_{\eta, a}=U(\widehat{\mathfrak g})\otimes_{U(\widehat{\mathfrak g}_+\oplus \mathbb C c)}\mathbb C v.$$
  Any  quotient  of 
  $V_{\eta, a}$ is a Whittaker module of type $\eta$ \cite[Lemma 5.2]{GZ}.
  
  Denote by $\mathfrak G$ the Affine Lie subalgebra of $\widehat{\mathfrak g}$ obtained by removing the derivation $d$: $\mathfrak G=\widehat{\mathfrak g}_-\oplus \mathfrak h\oplus \mathbb C c\oplus \widehat{\mathfrak g}_+$.
  Whittaker $\mathfrak G$-modules of type $\eta: U(\widehat{\mathfrak g}_+)\rightarrow \mathbb C$ are defined similarly as above. If ${W}_{\eta, a}$ is a  Whittaker $\mathfrak G$-modules of type $\eta$ and central charge $a$ then we get the induced Whittaker $\widehat{\mathfrak g}$-module of type $\eta$: 
  $$\widehat{W}_{\eta, a}=U(\widehat{\mathfrak g})\otimes_{U(\mathfrak G)} {W}_{\eta, a}\simeq \mathbb C[d]\otimes {W}_{\eta, a}.$$
  Note that the action of $d$ is free on $\widehat{W}_{\eta, a}$. On the other hand, for the same Whittaker function there could exist irreducible Whittaker  modules with free and with diagonalizable action of $d$ \cite{ALZ}.

\medskip

\section{Imaginary Whittaker modules for Affine Lie algebra}

\subsection{Whittaker modules for Heisenberg subalgebra}

Let $\eta: U(G_+)\rightarrow \mathbb C$ be an algebra homomorphism with $\eta(G_+)\neq 0$. A $G$-module $V$ is a \emph{Whittaker module of type $\eta$} if $V$ is generated   by a \emph{Whittaker element} $v$ such that
$xv=\eta(x)v$ for any $x\in U(G_+)$ \cite{Chr}.

For $a\in\mathbb C$ consider a $1$-dimensional $G_+\oplus \mathbb C c$-module $\mathbb C v$ such that $v$ is a Whittaker element of type $\eta$ and $cv=av$. Define a Whittaker module of type $\eta$ as follows:
$$M_{\eta, a}=U(G)\otimes_{U(G_+\oplus \mathbb C c)}\mathbb C v.$$

Note that module $M_{\eta, a}$ is not $\mathbb Z$-graded. The following properties of $M_{\eta, a}$ were shown in \cite[Proposition 5, Proposition 6, Corollary 8]{Chr}:

\begin{proposition}\label{prop-whitt-notgraded}
\begin{itemize}
\item The module $M_{\eta, a}$ is $U(G_-)$-free;
\item  The module $M_{\eta, a}$ is irreducible if $a\neq 0$;
 In this case $M_{\eta, a}$ is the unique (up to  isomorphism) irreducible Whittaker module of type $\eta$ with central charge $a$.

\end{itemize}
\end{proposition}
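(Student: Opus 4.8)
The plan is to realize $M_{\eta,a}$ concretely as a Fock-type module and to reduce the irreducibility to a lowering argument in an (infinite-dimensional) Weyl algebra. First I would record the structural facts about the Heisenberg subalgebra $G$: since $[\widehat{\mathfrak g}_{m\delta},\widehat{\mathfrak g}_{n\delta}]\subseteq\mathbb Cc$ exactly when $m+n=0$ and vanishes otherwise, both $G_+$ and $G_-$ are abelian, and for each $k>0$ the bracket induces a non-degenerate pairing between $\widehat{\mathfrak g}_{k\delta}$ and $\widehat{\mathfrak g}_{-k\delta}$ (the non-degeneracy of the invariant form on opposite root spaces). I would therefore fix, for every $k>0$, dual bases $\{x^{(k)}_i\}\subset\widehat{\mathfrak g}_{k\delta}$ and $\{y^{(k)}_i\}\subset\widehat{\mathfrak g}_{-k\delta}$ with $[x^{(k)}_i,y^{(l)}_j]=\delta_{kl}\delta_{ij}c$. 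The first bullet is then immediate from the PBW theorem: $M_{\eta,a}\cong U(G_-)\otimes\mathbb Cv$ as a left $U(G_-)$-module, and $U(G_-)=S(G_-)$ is a polynomial algebra since $G_-$ is abelian.

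Next I would translate the $G$-action on $M_{\eta,a}\cong S(G_-)v$ into operators on this polynomial algebra. Each $y^{(k)}_i$ acts by multiplication, while each $x^{(k)}_i$ acts as the scalar $\eta(x^{(k)}_i)$ plus $a\,\partial_{y^{(k)}_i}$: indeed $x^{(k)}_iv=\eta(x^{(k)}_i)v$, and commuting $x^{(k)}_i$ past the $y$-factors using $[x^{(k)}_i,y^{(l)}_j]=\delta_{kl}\delta_{ij}a$ (after specializing $c$ to $a$) yields exactly the Leibniz rule. Because $a\neq0$, the operators $a^{-1}\bigl(x^{(k)}_i-\eta(x^{(k)}_i)\bigr)$ lie in $U(G_+)$ and act as the partial derivatives $\partial_{y^{(k)}_i}$; in particular every submodule of $M_{\eta,a}$ is stable under them.

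For irreducibility I would take a nonzero submodule $N$ and a nonzero $w\in N$. Since $w$ is a finite sum of monomials in the $y$-variables applied to $v$, I choose a monomial $m_0=\prod (y^{(k)}_i)^{e_i}$ of maximal total degree occurring with nonzero coefficient and apply the matching operator $\prod\partial_{y^{(k)}_i}^{\,e_i}$ to $w$. A short combinatorial check then finishes the step: a monomial of strictly smaller degree is annihilated because the operator has higher order, and a monomial of the same degree distinct from $m_0$ has a strictly smaller exponent in some variable and is therefore also annihilated, so only $m_0$ survives and contributes a nonzero scalar multiple of $v$. Hence $v\in N$, and as $v$ generates $M_{\eta,a}$ we conclude $N=M_{\eta,a}$. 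This is the heart of the argument, and I expect the only genuine care to be the bookkeeping that each vector involves only finitely many variables, so the analysis really does reduce to a finite-dimensional Weyl algebra; note that $a\neq0$ is used both to invert $a$ and to guarantee the derivatives act nontrivially.

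Uniqueness then follows formally. The module $M_{\eta,a}$ is universal among Whittaker modules of type $\eta$ and central charge $a$: for any such module $V$ generated by a Whittaker vector $w$, the assignment $u\otimes v\mapsto uw$ is a well-defined surjective $G$-homomorphism $M_{\eta,a}\to V$, since $v$ and $w$ satisfy the same relations over $U(G_+\oplus\mathbb Cc)$. If $V$ is irreducible it is a nonzero quotient of the irreducible module $M_{\eta,a}$, hence $V\cong M_{\eta,a}$.
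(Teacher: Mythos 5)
Your proof is correct. Note, however, that the paper does not actually prove this proposition: it simply cites Propositions 5, 6 and Corollary 8 of Christodoulopoulou's paper \cite{Chr}, so there is no argument in the text to compare against. What you supply is a complete, self-contained version of the standard Fock-space argument (and essentially the one used in \cite{Chr}): the PBW isomorphism $M_{\eta,a}\cong U(G_-)\otimes\mathbb{C}v$ with $U(G_-)=S(G_-)$ gives freeness; the nondegeneracy of the pairing $\widehat{\mathfrak g}_{k\delta}\times\widehat{\mathfrak g}_{-k\delta}\to\mathbb{C}c$ lets you choose dual bases so that, for $a\neq 0$, the shifted raising operators $a^{-1}\bigl(x^{(k)}_i-\eta(x^{(k)}_i)\bigr)$ act as partial derivatives; and the leading-monomial lowering argument (kill lower total degree by order, kill other top-degree monomials by the strict exponent deficiency) produces the generator $v$ inside any nonzero submodule. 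The finiteness bookkeeping you flag is the only real subtlety and you handle it correctly, and the uniqueness claim does follow formally from universality of $M_{\eta,a}$ once irreducibility is known. So the proposal is a valid substitute for the omitted proof; it buys the reader a self-contained argument where the paper only offers a pointer to the literature.
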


Next we extend the  Heisenberg subalgebra  by the derivation $d$ and set 
 $\widetilde{G}=G\oplus \mathbb C d$.

A Whittaker $\widetilde{G}$-module of type $\eta$ and central charge $a\in\mathbb C$ is defined as follows:
$$\widetilde{M}_{\eta, a}=U(\widetilde{G})\otimes_{U({G}_+\oplus \mathbb C c)}\mathbb C v.$$

\begin{proposition}\label{prop-whitt-graded}
\begin{itemize}
\item The module $\widetilde{M}_{\eta, a}$ is $U({G}_-\oplus \mathbb C d)$-free;
\item The module $M_{\eta, a}$ is a $G$-submodule of $\widetilde{M}_{\eta, a}$;
\item  If $\eta |_{\widehat{\mathfrak g}_{n\delta}}\neq 0$ for infinitely many integer $n>0$ then the module $\widetilde{M}_{\eta, a}$ is irreducible. In this case $\widetilde{M}_{\eta, a}$ is the unique (up to  isomorphism) irreducible Whittaker module of type $\eta$ with central charge $a$.

\end{itemize}
\end{proposition}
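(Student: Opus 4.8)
The plan is to dispatch the three assertions in turn, the first two being immediate consequences of the PBW theorem and the third being the substantive point. Note first that $G_-$ and $G_+$ are abelian and that $[G_+,G_-]\subseteq\mathbb C c$, so that $G_-\oplus\mathbb C d$ is a Lie subalgebra of $\widetilde G$ (since $[d,G_-]\subseteq G_-$ and $[G_-,G_-]=0$) complementary to $G_+\oplus\mathbb C c$. Applying PBW to an ordered basis in which the factors of $G_-\oplus\mathbb C d$ precede those of $G_+\oplus\mathbb C c$ identifies $U(\widetilde G)\cong U(G_-\oplus\mathbb C d)\otimes U(G_+\oplus\mathbb C c)$, whence $\widetilde M_{\eta,a}\cong U(G_-\oplus\mathbb C d)$ as a left $U(G_-\oplus\mathbb C d)$-module; this is the first item. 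For the second, the inclusion $U(G)\hookrightarrow U(\widetilde G)$ induces a $G$-module map $M_{\eta,a}\to\widetilde M_{\eta,a}$ whose image is $U(G_-)v$. Since $G_-$ is abelian and $[G_+,G_-]\subseteq\mathbb C c$ acts by the scalar $a$, the subspace $U(G_-)v$ is stable under $G_-$, $c$ and $G_+$ and is therefore a $G$-submodule; comparison with the PBW description shows the map is injective, so $M_{\eta,a}$ embeds as the ``$d$-degree $0$'' $G$-submodule of $\widetilde M_{\eta,a}$.

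For the third item I would introduce the increasing filtration of $\widetilde M_{\eta,a}$ by $d$-degree, $F^{\le K}=\bigoplus_{k\le K}U(G_-)d^kv$. A direct computation with the relations $[d,p]=np$ for $p\in\widehat{\mathfrak g}_{n\delta}$ and $[G_+,G_-]\subseteq\mathbb C c$ shows that $G_-$ and $c$ preserve each $F^{\le K}$ while $G_+$ does not raise the $d$-degree. More precisely, for $p\in\widehat{\mathfrak g}_{n\delta}$ the shifted operator $p-\eta(p)$ acts on a monomial $q^{\mathbf m}d^kv$ as the sum of a \emph{contraction} term (a multiple of $a$ that removes a factor lying in $\widehat{\mathfrak g}_{-n\delta}$ from $q^{\mathbf m}$) and a \emph{$d$-lowering} term $\eta(p)\,q^{\mathbf m}\bigl((d-n)^k-d^k\bigr)v$, the latter being of $d$-degree exactly $k-1$ with leading coefficient $-\eta(p)\,nk$.

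Now let $N\subseteq\widetilde M_{\eta,a}$ be a nonzero submodule and choose $0\neq w\in N$ of minimal $d$-degree $K$. Only finitely many root vectors from the spaces $\widehat{\mathfrak g}_{-n\delta}$ occur in $w$, so, using the hypothesis that $\eta|_{\widehat{\mathfrak g}_{n\delta}}\neq 0$ for infinitely many $n>0$, I choose one such $n=n^{*}$ together with $p\in\widehat{\mathfrak g}_{n^{*}\delta}$ with $\eta(p)\neq 0$ and such that $\widehat{\mathfrak g}_{-n^{*}\delta}$ is disjoint from the factors occurring in $w$. For this choice the contraction term vanishes identically, so $(p-\eta(p))\,w$ reduces to its $d$-lowering part, which by the computation above is nonzero of $d$-degree exactly $K-1$ whenever $K\ge 1$; this contradicts minimality, forcing $K=0$ and hence $N\cap U(G_-)v\neq 0$. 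Since $a\neq 0$ (the case of interest, cf.\ Proposition~\ref{prop-whitt-notgraded}), $U(G_-)v\cong M_{\eta,a}$ is an irreducible $G$-module, and $N\cap U(G_-)v$ is a nonzero $G$-submodule of it, so $U(G_-)v\subseteq N$; in particular $v\in N$, and cyclicity of $v$ yields $N=\widetilde M_{\eta,a}$. Uniqueness then follows formally: any irreducible Whittaker $\widetilde G$-module of type $\eta$ and central charge $a$ is, by the universal property of induction, a nonzero quotient of $\widetilde M_{\eta,a}$, hence isomorphic to it.

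I expect the $d$-degree reduction to be the one genuinely delicate step, and it is exactly here that the infinitude hypothesis is indispensable: lowering the $d$-degree requires an operator $p-\eta(p)$ with $\eta(p)\neq 0$ whose contraction term can be annihilated, and this is possible only because arbitrarily large $n$ carry $\eta|_{\widehat{\mathfrak g}_{n\delta}}\neq 0$, letting us sidestep the finitely many $\widehat{\mathfrak g}_{-n\delta}$-factors present in any given element. The coupling between the central contraction and the $d$-shift inside a single generator of $G_+$ is thus the main obstacle; by contrast, the PBW identifications and the appeal to irreducibility of $M_{\eta,a}$ are routine.
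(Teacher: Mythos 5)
Your argument is correct, but it cannot be compared line-by-line with the paper's, because the paper gives no argument at all: its ``proof'' of this proposition consists of the single sentence that the three items follow from Propositions 20, 21, 25 and Corollary 28 of the cited work of Christodoulopoulou. What you have written is, in effect, a self-contained reconstruction of those results, and it follows the same strategy as the source: PBW gives the freeness over $U(G_-\oplus\mathbb C d)$ and the embedding of $M_{\eta,a}$ as $U(G_-)v$; irreducibility is obtained by filtering by $d$-degree and applying $p-\eta(p)$ for $p\in\widehat{\mathfrak g}_{n\delta}$ with $n$ chosen, via the infinitude hypothesis, so that $\widehat{\mathfrak g}_{-n\delta}$ does not occur among the $G_-$-factors of a minimal-degree element of the submodule, killing the contraction term and strictly lowering the $d$-degree; one then lands in $U(G_-)v\cong M_{\eta,a}$ and invokes its irreducibility from Proposition \ref{prop-whitt-notgraded}; uniqueness is the usual universal-property argument. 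The key computation $(p-\eta(p))\,q^{\mathbf m}d^kv=[p,q^{\mathbf m}]d^kv+\eta(p)q^{\mathbf m}\bigl((d-n)^k-d^k\bigr)v$, with leading term $-\eta(p)nk\,q^{\mathbf m}d^{k-1}v$, is exactly the engine of the original proof, and your bookkeeping of why the top-degree coefficient survives is sound.

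One point worth making explicit: your final step uses $a\neq 0$ (both for the irreducibility of $M_{\eta,a}$ and, implicitly, throughout the reduction), whereas the proposition as printed does not state this hypothesis. It is genuinely needed: for $a=0$ the submodule generated by $qv$, $0\neq q\in\widehat{\mathfrak g}_{-m\delta}$, equals $U(G_-)q\,\mathbb C[d]v$ and is proper, so the third item fails. You were right to import the assumption from the surrounding context rather than attempt a proof without it.
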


\begin{proof}
Follows from 
Propositions 20, 21, 25 and  Corollary 28 of \cite{Chr}.
\end{proof}

\medskip

\subsection{Imaginary Whittaker modules}

Consider a parabolic subalgebra $\widehat{\mathfrak p} =\widehat{\mathfrak l}\oplus  \widehat{\mathfrak u}_+,$
where $\widehat{\mathfrak l}=G+ H=(G+\mathbb C d)\oplus \mathfrak h$ and $[G, \mathfrak h]=0$.

\medskip

 \begin{theorem}\label{thm-imag-Heis}  Let $a\in\mathbb C\setminus \{0\}$, 
$\lambda\in (\mathfrak h)^*$ and 
   $\eta: U(G_+)\rightarrow \mathbb C$  an algebra homomorphism such that $\eta |_{\widehat{\mathfrak g}_{n\delta}}\neq 0$ for infinitely many integer $n>0$. Then the induced 
 $\widehat{\mathfrak g}$-module $$M_{\widehat{\mathfrak p}}(\widetilde{M}_{\eta, a})=U(\widehat{\mathfrak g})\otimes_{U(\widehat{\mathfrak p})}\widetilde{M}_{\eta, a}$$ is irreducible, where the action of $\mathfrak h$ on $\widetilde{M}_{\eta, a}$ is defined via $\lambda$.
  \end{theorem}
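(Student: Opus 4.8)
The plan is to show that every nonzero $\widehat{\mathfrak g}$-submodule $N\subseteq M_{\widehat{\mathfrak p}}(\widetilde M_{\eta,a})$ contains the bottom layer $1\otimes \widetilde M_{\eta,a}$, which $U(\widehat{\mathfrak g})$-generates the whole module by the vector-space identification $M_{\widehat{\mathfrak p}}(\widetilde M_{\eta,a})\cong U(\widehat{\mathfrak u}_-)\otimes \widetilde M_{\eta,a}$. First I would set up the grading by the finite root lattice $Q$ of $\mathfrak g$: for $x\in\widehat{\mathfrak g}_{\phi+n\delta}$ and $h\in\mathfrak h$ one has $[h,x]=\phi(h)x$ since $\delta$ vanishes on $\mathfrak h$, so the $\mathfrak h$-weight of a root vector records only its finite part $\phi$. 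Because $\mathfrak h$ acts on $\widetilde M_{\eta,a}$ by the scalar $\lambda$ and on $U(\widehat{\mathfrak u}_-)$ adjointly with weights in the negative cone $-Q_+$, the module decomposes into $\mathfrak h$-weight spaces with weights in $\lambda-Q_+$, the top one (weight $\lambda$) being exactly $1\otimes\widetilde M_{\eta,a}$. In particular $N$ is $\mathfrak h$-weight graded and, the weights being bounded above by $\lambda$, its weight set has a maximal element $\mu$; any nonzero $w\in N_\mu$ is then annihilated by $\widehat{\mathfrak u}_+$, since each $\widehat{\mathfrak g}_{\phi+n\delta}\subset\widehat{\mathfrak u}_+$ strictly raises the $\mathfrak h$-weight by $\phi>0$.

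The heart of the argument is the claim that the space of $\widehat{\mathfrak u}_+$-invariants of $M_{\widehat{\mathfrak p}}(\widetilde M_{\eta,a})$ equals $1\otimes\widetilde M_{\eta,a}$, which I would prove by a leading-term computation in the PBW basis. Write a putative invariant as $w=\sum_i u_i\otimes m_i$ with $u_i$ distinct ordered PBW monomials in $U(\widehat{\mathfrak u}_-)$ and $m_i\in\widetilde M_{\eta,a}$, and suppose some $u_i$ has positive degree $s$. For $x\in\widehat{\mathfrak g}_{\psi+k\delta}\subset\widehat{\mathfrak u}_+$ one has $x\cdot(u\otimes m)=\sum_t u'\,[x,(u)_t]\,u''\otimes m$, the term $u\otimes xm$ vanishing because $\widehat{\mathfrak u}_+\widetilde M_{\eta,a}=0$. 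A bracket of $x$ with a factor $f\in\widehat{\mathfrak g}_{-\phi+l\delta}$ of $u$ lands in $\widehat{\mathfrak l}=G+H$ precisely when $\psi=\phi$, giving an imaginary root vector in $\widehat{\mathfrak g}_{(k+l)\delta}$ when $k+l\neq 0$ and an element of $\mathfrak h\oplus\mathbb C c$ when $k+l=0$; moving it to the right makes it act on $m_i$ and lowers the $\widehat{\mathfrak u}_-$-degree by one. Choosing $x$ so that $\psi$ matches the finite part of a distinguished factor of the maximal monomial and so that $k+l=0$, this bracket contributes $c$, acting by the nonzero scalar $a$ — this is exactly where $a\neq 0$ is used — while for the contributions producing positive imaginary vectors the hypothesis $\eta|_{\widehat{\mathfrak g}_{n\delta}}\neq 0$ for infinitely many $n$ guarantees a nonzero action on $\widetilde M_{\eta,a}$. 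The resulting degree-$(s-1)$ term then carries a nonzero coefficient and, for the choice of $x$ dictated by the maximal monomial, cannot be cancelled by any other contribution, so $xw\neq 0$, contradicting invariance; hence all $u_i$ are scalars and $w\in 1\otimes\widetilde M_{\eta,a}$.

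Granting the claim, the maximal-weight vector $w$ lies in $1\otimes\widetilde M_{\eta,a}$, so $\mu=\lambda$ and $N\cap(1\otimes\widetilde M_{\eta,a})\neq 0$. By Proposition \ref{prop-whitt-graded} the hypotheses on $a$ and $\eta$ make $\widetilde M_{\eta,a}$ an irreducible $\widehat{\mathfrak l}$-module, so $U(\widehat{\mathfrak l})w=1\otimes\widetilde M_{\eta,a}\subseteq N$; since this subspace $U(\widehat{\mathfrak g})$-generates the induced module, $N=M_{\widehat{\mathfrak p}}(\widetilde M_{\eta,a})$, giving irreducibility. This also exhibits the statement as the special case of Theorem \ref{thm-I} with $\widehat{\mathfrak l}_1=G+\mathbb C d$, $\mathfrak h_{\mathfrak l}^{\perp}=\mathfrak h$ and $\widehat W_{\eta,a}=\widetilde M_{\eta,a}$.

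I expect the leading-term analysis of the claim to be the main obstacle. The delicate point is the bookkeeping of the brackets $[x,f]$ that fall back into $\widehat{\mathfrak u}_+$ (when $\psi-\phi$ is again a positive root), which must be commuted to the right and can a priori feed back into the degree-$s$ part; controlling these requires a carefully refined ordering of PBW monomials together with a judicious choice of the $\delta$-degree $k$ of $x$, so that the targeted $c$- or imaginary-root contribution is extremal and noncancelling across the different summands $m_i$. The nonvanishing of $a$ is precisely what prevents the decisive coefficient from collapsing.
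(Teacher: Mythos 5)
Your overall strategy is sound and is a legitimate reorganization of the standard argument: the reduction via the $\mathfrak h$-weight grading (weights lie in $\lambda-Q_+$ because $\widehat{\mathfrak u}_\pm$ consists only of real root spaces when $\widehat{\mathfrak l}=G+H$, the support of a nonzero submodule has a maximal element, and a vector of maximal weight is $\widehat{\mathfrak u}_+$-invariant) is correct, as is the final step using Proposition \ref{prop-whitt-graded} to pass from a nonzero vector in $1\otimes\widetilde M_{\eta,a}$ to the whole module. The problem is that everything has been concentrated into the claim that the $\widehat{\mathfrak u}_+$-invariants are exactly $1\otimes\widetilde M_{\eta,a}$, and your argument for that claim is not a proof: the decisive sentence ``the resulting degree-$(s-1)$ term \ldots cannot be cancelled by any other contribution'' is precisely the statement that needs to be established, and you yourself concede that the bookkeeping of brackets $[x,f]$ landing back in $\widehat{\mathfrak u}_+$ (which happens for general $\mathfrak g$ whenever $\psi-\phi$ is again a positive root) and the cross-cancellation between the different summands $u_i\otimes m_i$ are unresolved. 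A secondary imprecision: you invoke $\eta|_{\widehat{\mathfrak g}_{n\delta}}\neq 0$ to get ``a nonzero action'' of positive imaginary root vectors, but $\eta$ only governs the action on the Whittaker generator, not on arbitrary $m_i\in\widetilde M_{\eta,a}$; in the actual argument that hypothesis is needed only for the irreducibility of the inducing module $\widetilde M_{\eta,a}$, while the non-vanishing in the induction step rests on $a\neq 0$ alone (torsion-freeness of $G_-$, cf.\ Lemma \ref{lem-whitt}).

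For comparison, the paper disposes of this theorem by citing \cite[Theorem 50]{Chr} for the non-twisted case and deducing the general case from Theorem \ref{thm-main-2}, whose proof is built to avoid exactly the monomial bookkeeping you run into. There one inducts on $\tau(v)$, the number of simple-root factors of the $\mathfrak h$-weight lying outside $\widehat{\mathfrak l}$; the inductive step (via \cite[Lemma 5.3]{BBFK}) strictly decreases $\tau$, so the hard analysis is confined to $\tau(v)=1$, where each $u_i$ is a \emph{single} root vector. One then applies one element $u_N\in\widehat{\mathfrak g}_{\phi_1-(N+k_1)\delta}$ whose finite part matches $\phi_1$ exactly, so that $[u_N,u_1]$ lands in $\widehat{\mathfrak g}_{-N\delta}$ and all other brackets land in $U(\widehat{\mathfrak l})$; non-vanishing is then extracted by pairing against $x_N$ with $[x_N,x_{-N}]=c$ and using $a\neq 0$, with a short linear-independence argument killing the potential cancellations. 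If you want to salvage your version, you should either import that $\tau$-induction to reduce to degree-one monomials, or carry out the PBW leading-term analysis in full with an explicit monomial order; as written, the central claim is asserted rather than proved.
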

 
 \begin{proof}
  For non-twisted Affine Lie algebras the statement is \cite[Theorem 50]{Chr}.  For an arbitrary Affine Lie algebra the result is a particular case of Theorem \ref{thm-main-2} shown below.

\end{proof}

\medskip

\section{Generalized Imaginary Whittaker modules}

Now we consider a general parabolic subalgebra $\widehat{\mathfrak p}=\widehat{\mathfrak l}\oplus \widehat{\mathfrak u}_+$ with infinite dimensional Levi factor $\widehat{\mathfrak l}$. 

\medskip

\subsection{Inducing from Whittaker modules}

Let   $\widehat{\mathfrak l}_1$  be  a Lie subalgebra of $\widehat{\mathfrak l}$
generated by the subspaces $\widehat{\mathfrak g}_{\alpha}\cap 
 \widehat{\mathfrak l}$, 
$\alpha\in \Delta$,  and $ d$.  Set  $G({\widehat{\mathfrak l}})=\widehat{\mathfrak l}\cap G$ for the Lie subalgebra of $\widehat{\mathfrak l}$ spanned by its imaginary root subspaces.
Denote by   
$H_{\mathfrak l}\subset H$ a Cartan subalgebra of $\widehat{\mathfrak l}_1$ and set $\mathfrak h_{\mathfrak l}=H_{\mathfrak l}\cap \mathfrak h$. Let 
$\mathfrak h_{\mathfrak l}^{\perp}$ be the orthogonal complement to $\mathfrak h_{\mathfrak l}$ in $\mathfrak h$ with respect to the Killing form. Then we have 
$\widehat{\mathfrak l}=\widehat{\mathfrak l}_1\oplus \mathfrak h_{\mathfrak l}^{\perp}$, $H= H_{\mathfrak l}\oplus \mathfrak h_{\mathfrak l}^{\perp}$ and 
 $[\widehat{\mathfrak l}_1, \mathfrak h_{\mathfrak l}^{\perp}]=0$.  For simplicity we assume now that If  $\widehat{\mathfrak l}_1$ contains nonzero real root subspaces of 
 $\widehat{\mathfrak l}$ then 
$\widehat{\mathfrak l}_1$ is  generated by them   and by $d$. This means that 
$\widehat{\mathfrak l}_1$ is the affinization (twisted or untwisted) of a semisimple or abelian Lie subalgebra $\mathfrak l_1$. Under this assumption we have  $G({\widehat{\mathfrak l}})\subset 
[{\widehat{\mathfrak l}}, {\widehat{\mathfrak l}}]=[{\widehat{\mathfrak l}}_1, {\widehat{\mathfrak l}}_1]$.  Consider the standard triangular decomposition of $\widehat{\mathfrak l}_1$: 
 $\widehat{\mathfrak l}_1=\widehat{\mathfrak l}_1^-\oplus H_{\mathfrak l}\oplus \widehat{\mathfrak l}_1^+$, where 
 $\widehat{\mathfrak l}_1^{\pm}=\sum_{\alpha\in \Delta_{\pm}}\widehat{\mathfrak g}_{\alpha}\cap \widehat{\mathfrak l}_1$, and set $G({\widehat{\mathfrak l}})_{\pm}=G({\widehat{\mathfrak l}})\cap \widehat{\mathfrak l}_1^{\pm}$.
Let ${G}(\widehat{\mathfrak l})^{\perp}$ be the orthogonal complement of $G({\widehat{\mathfrak l}})$ (with respect to the Killing form), that is $G=G(\widehat{\mathfrak l})+G(\widehat{\mathfrak l})^{\perp}$, $[G(\widehat{\mathfrak l})^{\perp}, \widehat{\mathfrak l}_1]=0$ and  $G(\widehat{\mathfrak l})\cap G(\widehat{\mathfrak l})^{\perp}=\mathbb C c$.  

We have a natural triangular decomposition 
$${G}(\widehat{\mathfrak l})^{\perp}={G}(\widehat{\mathfrak l})^{\perp}_-\oplus \mathbb C c\oplus {G}(\widehat{\mathfrak l})^{\perp}_+,$$
where $\widehat{\mathfrak g}_{k\delta}\cap {G}(\widehat{\mathfrak l})^{\perp}\subset \widetilde{G}(\widehat{\mathfrak l})^{\perp}_{\pm} $ if and only if $k\in \mathbb Z_{\pm}$. Under our assumption we have 
${G}(\widehat{\mathfrak l})^{\perp}_+\subset \widehat{\mathfrak u}_+$.

The general case of an arbitrary reductive $\mathfrak l_1$ will be discussed in the next section.
If $\widehat{\mathfrak l}_1$ has no nonzero real root subspaces then without loss of generality  we assume that 
it contains the whole Heisenberg subalgebra, that is $G({\widehat{\mathfrak l}})=G$. 

For a nonzero homomorphism $\eta: U(\widehat{\mathfrak l}_1^+)\rightarrow \mathbb C$ and $a\in \mathbb C \setminus \{0\}$
 consider an irreducible Whittaker  $\widehat{\mathfrak l}_1$-module $\widehat{W}_{\eta, a}$   of type $\eta$  and central charge $a$.

 \begin{lemma}\label{lem-whitt}
 If $a\neq 0$ then $G({\widehat{\mathfrak l}})_-$ is torsion free on $\widehat{W}_{\eta, a}$.
 
 \end{lemma}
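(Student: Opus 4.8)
The plan is to use that $G(\widehat{\mathfrak l})_-$ is abelian, so that $A:=U(G(\widehat{\mathfrak l})_-)$ is a polynomial algebra, in particular an integral domain; thus being torsion free means precisely that no nonzero $p\in A$ annihilates a nonzero vector of $\widehat W_{\eta,a}$. First I would fix a basis $\{y_i\}$ of $G(\widehat{\mathfrak l})_-$ and, using nondegeneracy of the Heisenberg pairing together with $a\neq 0$, choose dual vectors $x_i\in G(\widehat{\mathfrak l})_+$ with $[x_i,y_j]=\delta_{ij}c$. Since $c$ acts by $a$, one has $[x_i,p]=c\,(\partial p/\partial y_i)$ in $U(G(\widehat{\mathfrak l}))$, so on the module $x_i(pw)=a\,(\partial p/\partial y_i)\,w+p\,(x_iw)$; that is, $\operatorname{ad}(x_i)$ realizes the nonzero multiple $a\,\partial/\partial y_i$ of a partial derivative on $A$. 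The prototype to keep in mind is the cyclic $G(\widehat{\mathfrak l})$-submodule $U(G(\widehat{\mathfrak l}))w_0$ generated by the Whittaker vector $w_0$: because $G(\widehat{\mathfrak l})_+$ acts on $w_0$ through the character $\eta$ and $a\neq 0$, this is a Whittaker module for the Heisenberg subalgebra $G(\widehat{\mathfrak l})$, hence a nonzero quotient of the module $M_{\eta',a}$ with $\eta'=\eta|_{G(\widehat{\mathfrak l})_+}$ of (the analogue for $G(\widehat{\mathfrak l})$ of) Proposition~\ref{prop-whitt-notgraded}. As that module is irreducible and $A$-free for $a\neq 0$, the submodule $U(G(\widehat{\mathfrak l}))w_0$ is $A$-free as well, so $pw_0\neq 0$ for every nonzero $p\in A$.

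Next I would consider the $A$-torsion subspace $T=\{w:pw=0\text{ for some }p\in A\setminus\{0\}\}$, which is a $\mathbb C$-subspace since $A$ is a domain, and attempt to prove $T=0$ from irreducibility of $\widehat W_{\eta,a}$. The previous paragraph already gives $w_0\notin T$, so $T\neq\widehat W_{\eta,a}$, and it therefore suffices to check that $T$ is a $\widehat{\mathfrak l}_1$-submodule. Three of the four types of generators are immediate. If $pw=0$ then $p(y_iw)=y_i(pw)=0$, so $T$ is stable under $G(\widehat{\mathfrak l})_-$. For $x_i\in G(\widehat{\mathfrak l})_+$ the identity above gives $p(x_iw)=-a(\partial p/\partial y_i)w$, whence $p^2(x_iw)=-a(\partial p/\partial y_i)(pw)=0$ because $\partial p/\partial y_i\in A$ commutes with $p$ in the commutative algebra $A$; as $p^2\neq 0$, this shows $x_iw\in T$. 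The same $p^2$-trick works for $h\in H_{\mathfrak l}$, since $[h,p]\in A$ is the weight operator determined by the grading and hence commutes with $p$, giving $p^2(hw)=0$ and stability under $H_{\mathfrak l}$.

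The remaining case, stability of $T$ under the real root vectors, is the point where I expect the real difficulty. For a real root vector $f\in\widehat{\mathfrak l}_1$ one has that $[f,G(\widehat{\mathfrak l})_-]$ lands in \emph{real} root spaces rather than in $A$, so $\operatorname{ad}(f)$ does not preserve $A$ and the simple $p^2$-argument breaks down: the relation $p(fw)=-\operatorname{ad}(f)(p)\,w$ now has a right-hand side outside $Aw$. To close this case I would use that, in the affine algebra $\widehat{\mathfrak l}_1$, real root vectors act locally nilpotently, so $\operatorname{ad}(f)^{N}(p)=0$ for some $N$, and run an induction on this nilpotency degree to manufacture a nonzero element of $A$ annihilating $fw$; alternatively I would pass to the $d$-free realization $\widehat W_{\eta,a}\cong\mathbb C[d]\otimes W_{\eta,a}$ and filter by $d$-degree so as to reduce the whole statement to the Heisenberg prototype above. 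Once $T$ is known to be a $\widehat{\mathfrak l}_1$-submodule, irreducibility forces $T=0$ or $T=\widehat W_{\eta,a}$, and $w_0\notin T$ rules out the latter, proving that $G(\widehat{\mathfrak l})_-$ acts torsion freely on $\widehat W_{\eta,a}$.
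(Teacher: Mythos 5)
Your overall strategy (pass to the torsion subspace $T$ of the polynomial algebra $A=U(G(\widehat{\mathfrak l})_-)$, show it is a proper submodule, and invoke irreducibility) is different from the paper's, and the parts you carry out are fine: $T$ is a subspace because $A$ is a domain, and your $p^2$-trick does handle $G(\widehat{\mathfrak l})_{\pm}$ and $H_{\mathfrak l}$. But the proof has a genuine gap exactly where you flag it, and neither of your two proposed repairs closes it. For a real root vector $f$ with $D=\operatorname{ad}(f)$, the relation $p(fw)=-D(p)w$ leads, upon further multiplication by $p$, to $p^2(fw)=-[p,D(p)]w$, and the commutator $[p,D(p)]$ is built from terms $\operatorname{ad}(y_{i_1})\operatorname{ad}(y_{i_2})(f)$ with $y_{i_j}\in G(\widehat{\mathfrak l})_-$; these live in real root spaces of ever more negative $\delta$-degree and never die out. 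Local nilpotency of $\operatorname{ad}(f)$ is irrelevant here, since what is being iterated is $\operatorname{ad}$ of elements of $G(\widehat{\mathfrak l})_-$ applied to $f$, not $\operatorname{ad}(f)$ applied to anything; so the induction "on the nilpotency degree" has nothing to induct on. The alternative of filtering $\widehat W_{\eta,a}\simeq\mathbb C[d]\otimes W_{\eta,a}$ by $d$-degree only reduces the problem to torsion-freeness of $G(\widehat{\mathfrak l})_-$ on $W_{\eta,a}$, which is the statement you are trying to prove, so it is circular. A smaller issue: since $G(\widehat{\mathfrak l})_+\subset[\widehat{\mathfrak l}_1^+,\widehat{\mathfrak l}_1^+]$, the restriction $\eta'=\eta|_{G(\widehat{\mathfrak l})_+}$ is identically zero, so Proposition \ref{prop-whitt-notgraded} (stated under $\eta(G_+)\neq 0$) does not literally apply to the cyclic submodule $U(G(\widehat{\mathfrak l}))w_0$; the conclusion you want there (it is the Fock module, free over $A$, when $a\neq 0$) is still true, but needs the classical Heisenberg argument rather than that citation.

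The paper avoids the submodule issue entirely by arguing on explicit elements. Every vector is $uv_\eta$ with $u\in U(\widehat{\mathfrak l}_1^-)$; assuming $xuv_\eta=0$ with $x\in\widehat{\mathfrak l}_{-k\delta}$ and $uv_\eta\neq 0$, one picks $\bar x\in\widehat{\mathfrak l}_{k\delta}$ with $[\bar x,x]=c$ and uses that $\bar x v_\eta=0$ (again because $\eta$ kills $[\widehat{\mathfrak l}_1^+,\widehat{\mathfrak l}_1^+]$). Either $\bar x uv_\eta=0$, and then $0=\bar x xuv_\eta=a\,uv_\eta$ is a contradiction; or one takes the minimal $k$ with $\bar x^{k+1}uv_\eta=0$ (such $k$ exists since $\operatorname{ad}(\bar x)^m(u)$ eventually lands in $U(\widehat{\mathfrak l}_1^+)$) and computes $0=\bar x^{k+1}xuv_\eta=(k+1)a\,\bar x^{k}uv_\eta$, again a contradiction. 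If you want to salvage your approach, you would need a separate argument for why $T$ is stable under the real root vectors; as written, the torsion-submodule route does not go through.
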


 \begin{proof}
 Let $v_{\eta}$ be a Whittaker vector in $\widehat{W}_{\eta, a}$. Set $\widehat{\mathfrak l}_{k\delta}=\widehat{\mathfrak g}_{k\delta}\cap \widehat{\mathfrak l}$. 
 Then $\widehat{\mathfrak l}_{k\delta}v_{\eta}=0$  
for any $k>0$ as $\widehat{\mathfrak l}_{k\delta}\subset  [\widehat{\mathfrak l}_1^+, \widehat{\mathfrak l}_1^+]$ and
 $\eta([\widehat{\mathfrak l}_1^+, \widehat{\mathfrak l}_1^+])=0$. Suppose that  for some nonzero $x\in \widehat{\mathfrak l}_{-k\delta}$ with $k>0$ and some $u\in U(\widehat{\mathfrak l}_1^-)$ we have 
$xu v_{\eta}=0$ while $u v_{\eta}\neq 0$. Assume for simplicity that $\widehat{\mathfrak l}$ is non-twisted and  take any nonzero $\bar{x}\in \widehat{\mathfrak l}_{k\delta}$ such that $[\bar{x}, x]=c$. 
 If $\bar{x} u v_{\eta}=0$ then  
$$0=\bar{x} x u v_{\eta}=[\bar{x}, x] u v_{\eta}=a u v_{\eta},$$
which is a contradiction. 

 Assume $\bar{x} u v_{\eta}\neq 0$. We have  
$$\bar{x} uv_{\eta}=[\bar{x}, u]v_{\eta}+   u\bar{x} v_{\eta} = [\bar{x}, u]v_{\eta},$$
since  $\eta(\bar{x}) =0$. Applying $\bar{x}$ sufficiently many times we get that for some $m>0$, 
$ad(\bar{x})^m(u)\in U(\widehat{\mathfrak l}_1^+)$ and 
$$\bar{x}^{m+1} uv_{\eta}=\bar{x}ad(\bar{x})^m(u)v_{\eta}=[\bar{x}, ad(\bar{x})^m(u)]v_{\eta}=\eta([\bar{x}, ad(\bar{x})^m(u)])v_{\eta}=0.$$

Choose the smallest $k>0$ such that $\bar{x}^{k+1} uv_{\eta}=0$ but $\bar{x}^{k} uv_{\eta}\neq 0$. Now we have
$$0=\bar{x}^{k+1}x uv_{\eta}=[\bar{x}^{k+1}, x] uv_{\eta} + x \bar{x}^{k+1} uv_{\eta}=
(k+1)a \bar{x}^{k} uv_{\eta},
$$
giving a contradiction.

Similarly, one can extend the arguments above to an arbitrary nonzero element $y\in U(G({\widehat{\mathfrak l}})_-)$ and show that $y u v_{\eta}\neq 0$ if $u v_{\eta}\neq 0$ completing the proof. 
   The twisted case is treated analogously.
 
 \end{proof}

\medskip

\begin{theorem}\label{thm-main-2}
Let  $\widehat{\mathfrak l}=\widehat{\mathfrak l}_1\oplus \mathfrak h_{\mathfrak l}^{\perp}$ with semisimple or abelian ${\mathfrak l}_1$,  $\lambda\in (\mathfrak h_{\mathfrak l}^{\perp})^*$, $a\in\mathbb C\setminus \{0\}$, $\eta\neq 0$ and $\widehat{W}_{\eta, a}$ an irreducible Whittaker  $\widehat{\mathfrak l}_1$-module    of type $\eta$ and central charge $a$. Define the action of $\mathfrak h_{\mathfrak l}^{\perp}$ on $\widehat{W}_{\eta, a}$ by $\lambda$ making it a $\widehat{\mathfrak p}$-module with $\widehat{\mathfrak u}_+\cdot \widehat{W}_{\eta, a}=0$.
  Then the induced 
 $\widehat{\mathfrak g}$-module $$M_{\widehat{\mathfrak p}}(\widehat{W}_{\eta, a})=U(\widehat{\mathfrak g})\otimes_{U(\widehat{\mathfrak p})} \widehat{W}_{\eta, a}$$ is irreducible.

\end{theorem}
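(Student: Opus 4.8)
The plan is to prove that every nonzero $\widehat{\mathfrak g}$-submodule $N$ of $M:=M_{\widehat{\mathfrak p}}(\widehat W_{\eta,a})$ contains the top space $V_0:=1\otimes\widehat W_{\eta,a}$; irreducibility is then immediate. Indeed, since $\widehat{\mathfrak u}_+V_0=0$, $\mathfrak h_{\mathfrak l}^{\perp}$ acts on $V_0$ by the scalars $\lambda$, and $\widehat{\mathfrak l}_1$ acts on $V_0\cong\widehat W_{\eta,a}$ irreducibly, the space $V_0$ is an irreducible $\widehat{\mathfrak l}$-module stabilized by $\widehat{\mathfrak p}$. Hence once $N\cap V_0\neq 0$ one gets that $N\cap V_0$ is a nonzero $\widehat{\mathfrak l}$-submodule of $V_0$, so $N\cap V_0=V_0$, and then $N\supseteq U(\widehat{\mathfrak g})V_0=U(\widehat{\mathfrak u}_-)V_0=M$ because $M$ is generated by its top. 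So everything reduces to the single assertion that $N$ meets $V_0$.

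To reach the top I would use the PBW identification $M\cong U(\widehat{\mathfrak u}_-)\otimes\widehat W_{\eta,a}$ together with the splitting $\widehat{\mathfrak u}_-=\widehat{\mathfrak u}_-^{re}\oplus G(\widehat{\mathfrak l})^{\perp}_-$ of the opposite radical into its negative real part and its negative imaginary part, and carry out the reduction in two phases. First, filtering $M$ by the total PBW degree in $\widehat{\mathfrak u}_-$, one checks that $\widehat{\mathfrak u}_+$ preserves this filtration and does not raise the degree: a raising generator either brackets a negative radical factor into a negative radical element of no larger degree, or into $\widehat{\mathfrak p}$, where it then acts on $\widehat W_{\eta,a}$ and strictly drops the degree. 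The real phase then applies suitable $x_\gamma\in\widehat{\mathfrak u}_+^{re}$, using the nondegeneracy of the pairing $[\widehat{\mathfrak u}_+^{re},\widehat{\mathfrak u}_-^{re}]\to H$ to cancel the negative real radical factors of a nonzero $m\in N$ one at a time and land, in the standard way, in $N\cap\bigl(U(G(\widehat{\mathfrak l})^{\perp}_-)\otimes\widehat W_{\eta,a}\bigr)$. The subtle point is that brackets between real raising operators and real radical generators can produce imaginary Levi factors in $G(\widehat{\mathfrak l})_-\subseteq[\widehat{\mathfrak l}_1,\widehat{\mathfrak l}_1]$; Lemma~\ref{lem-whitt}, i.e.\ torsion-freeness of $G(\widehat{\mathfrak l})_-$ on $\widehat W_{\eta,a}$, is exactly what guarantees that these byproducts do not force the reduced element to vanish.

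The second, imaginary, phase takes place inside $U(G(\widehat{\mathfrak l})^{\perp}_-)\otimes\widehat W_{\eta,a}$, which is a module over the Heisenberg algebra $G(\widehat{\mathfrak l})^{\perp}$: here $G(\widehat{\mathfrak l})^{\perp}_+\subseteq\widehat{\mathfrak u}_+$ annihilates $V_0$, commutes with $\widehat{\mathfrak l}_1$ and with $\mathfrak h_{\mathfrak l}^{\perp}$, and satisfies $[G(\widehat{\mathfrak l})^{\perp}_+,G(\widehat{\mathfrak l})^{\perp}_-]\subseteq\mathbb C c$ with $c$ acting by the scalar $a$. Since $a\neq 0$, this makes $U(G(\widehat{\mathfrak l})^{\perp}_-)\otimes\widehat W_{\eta,a}$ a module of Fock type on which $G(\widehat{\mathfrak l})^{\perp}_+$ acts by nonzero annihilation operators, acts locally nilpotently, and has joint kernel exactly $V_0$. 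Applying $G(\widehat{\mathfrak l})^{\perp}_+$ repeatedly to a nonzero element of $N\cap\bigl(U(G(\widehat{\mathfrak l})^{\perp}_-)\otimes\widehat W_{\eta,a}\bigr)$ therefore terminates at a nonzero element of $N\cap V_0$, completing the reduction.

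The main obstacle is the imaginary directions, and it is a genuinely affine phenomenon: the Levi imaginary part $G(\widehat{\mathfrak l})$ and the radical imaginary part $G(\widehat{\mathfrak l})^{\perp}$ lie in the \emph{same} root spaces $\widehat{\mathfrak g}_{k\delta}$, so no linear functional on $H^*$ separates Levi from radical there and the weight/height combinatorics available in the real directions simply does not apply. This is why the hypothesis $a\neq 0$ is indispensable: it is the central charge that makes the pairing $[G(\widehat{\mathfrak l})^{\perp}_+,G(\widehat{\mathfrak l})^{\perp}_-]\subseteq\mathbb C c$ nondegenerate and turns the imaginary radical variables into an irreducible Fock module with joint kernel $V_0$, replacing the missing grading argument. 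Since $\widehat W_{\eta,a}$ is a Whittaker rather than a weight module, no highest-weight reasoning is available on the Levi factor either, and the torsion-freeness of Lemma~\ref{lem-whitt} is the tool that keeps the degree-lowering steps from collapsing. The twisted case is handled by the same scheme after the routine adjustments to the imaginary root multiplicities.
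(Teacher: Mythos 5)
Your overall skeleton --- reduce to showing that every nonzero submodule meets the top space, handle the imaginary radical directions by a Fock-module argument with $a\neq 0$, and use Lemma~\ref{lem-whitt} to control the imaginary Levi directions --- matches the strategy of the paper, and your diagnosis of where the difficulty sits (the root spaces $\widehat{\mathfrak g}_{k\delta}$ straddling Levi and radical, the unavailability of highest-weight reasoning on a Whittaker top) is exactly right. The imaginary phase as you describe it is sound: $U(G(\widehat{\mathfrak l})^{\perp}_-)\otimes\widehat W_{\eta,a}$ is of Fock type, $G(\widehat{\mathfrak l})^{\perp}_+$ acts locally nilpotently with joint kernel $1\otimes\widehat W_{\eta,a}$ when $a\neq 0$, and repeated application terminates at a nonzero top element.

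The gap is in the real phase, which is where essentially all of the paper's work is concentrated and which you dispatch with ``cancel the negative real radical factors one at a time \dots\ in the standard way.'' Two things block the standard way here. First, $\widehat W_{\eta,a}$ is not an $H_{\mathfrak l}$-weight module (and $d$ acts freely), so the only weight grading available on $M_{\widehat{\mathfrak p}}(\widehat W_{\eta,a})$ is the coarse one by $\mathfrak h_{\mathfrak l}^{\perp}$, which sees only the number $\tau(v)$ of non-Levi simple roots; there is no finer grading with which to isolate a leading PBW monomial among summands $u_iw_i$ that differ in their Levi directions or in their $\delta$-degree. The paper substitutes for this a careful choice of index (first minimizing the number $\tau_i$ of Levi simple roots occurring, then the $\delta$-degree $k_i$) and a raising operator $u_N\in\widehat{\mathfrak g}_{\beta+\alpha_1-(N+k_1)\delta}$ with $N$ large. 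Second, and more seriously, after applying a raising operator the contributions of the distinct summands $u_iw_i$ can interfere: even once one knows that the ``leading'' term $x_{-N}w_1$ is nonzero (this is where Lemma~\ref{lem-whitt} and $a\neq 0$ enter), the full sum $x_{-N}w_1+\sum_{j\ge 2}y^j_Nw_j$ could still vanish. The paper rules this out by hitting the result with a further element $x_N\in\widehat{\mathfrak g}_{N\delta}$ paired against $x_{-N}$ and deriving a linear-dependence contradiction among the $y^j_Nw_j$. Your proposal contains neither of these arguments, and without them the assertion that one lands on a \emph{nonzero} element of $N\cap\bigl(U(G(\widehat{\mathfrak l})^{\perp}_-)\otimes\widehat W_{\eta,a}\bigr)$ is exactly the content of the theorem rather than the outcome of a routine reduction. (A smaller inaccuracy: the pairing $[\widehat{\mathfrak u}_+^{re},\widehat{\mathfrak u}_-^{re}]\to H$ does not produce scalars on $\widehat W_{\eta,a}$, since $H_{\mathfrak l}$ does not act semisimply there, so even the one-variable cancellation step cannot be run as in the generalized Verma module case.)
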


\begin{proof}

Take any nonzero element of $v\in M_{\widehat{\mathfrak p}}(\widehat{W}_{\eta, a})$. By the PBW theorem the lement  $v$ can be written in the following form:
$$v=\sum_{i} u_i w_i,$$ where 
 $u_i\in U(\widehat{\mathfrak{u}}_-)$, $w_i\in \widehat{W}_{\eta, a}$.
 We assume that $u_i$ are linearly independent and $w_i\neq 0$ for all $i$. We can also assume that $v$ is $\mathfrak h_0^{\perp}$-weight element. 
  Then for each $i$, $u_i\in U(\widehat{\mathfrak{g}})_{-\phi_i + k_i \delta}$ for some  $\phi_i$ generated by the roots of $\mathfrak g$ and some integer $k_i$. Decompose each $\phi_i$ into a sum of simple roots. Note that the total number of simple roots of $\mathfrak g$ in $\phi_i$ which are not roots of $\mathfrak l$ is the same for all $i$ due to the fact that $v$ is an $\mathfrak h_0^{\perp}$-weight element. Denote this number by 
 $\tau (v)$ .   Also, denote by $\tau_i(v)$ the total number of simple roots of $\mathfrak l$ in $\phi_i$. Note that $\tau_i(v)$ can be different from $\tau_j(v)$ if $i\neq j$. We proceed by induction on $\tau(v)$.
  Suppose  that $\tau(v)=1$ and $\beta $ is a unique root of $\mathfrak{g}$ in all $\phi_i$. Then for each $i$,  $u_i \in \widehat{\mathfrak{g}}_{-\phi_i +k_i\delta}$ and $\phi_i =\beta +\alpha_i$, where  $\alpha_{i}$ decomposes into a sum of simple roots of $\mathfrak l$. Consider such indices $i$ for which 
  $\tau_i(v)$ is the least possible and among those the ones for which $k_i$ is the least possible. Fix an index $j$ satisfying all these conditions. Without loss of generality we may assume that $j=1$. For a sufficiently large $N$
($N>>k_i$ for all $i$)  choose a nonzero
  $u_N \in \widehat{\mathfrak{g}}_{\beta + \alpha_1 +(-N-k_1)\delta}$. Then we have 
 $$u_N \cdot v=\sum_i u_N u_i w_i=\sum_i [u_N, u_i] w_i=x_{-N}w_1+\sum_{j\geq 2} y_N^j w_j,$$
 where  $x_{-N}\in \widehat{\mathfrak{g}}_{-N\delta}$ is nonzero and  $y_N^{j}\in U(\widehat{\mathfrak{l}})$ for all $j$. Rewriting the sum if necessary we can assume that all summands are linearly independent.
 In the twisted case we restrict ourselves to those $N$ for which $\beta + \alpha_1 +(-N-k_1)\delta$ is a root. Note  $x_{N}w_i= 0$  for all $i$ if $N$ is sufficiently large. Moreover, for such $N$, we have 
 $x_{-N}w_1\neq 0$. Indeed, write $x_{-N}=x_{-N}^{\mathfrak{l}}+x_{-N}^{\mathfrak{u}}$, where $x_{-N}^{\mathfrak{l}}\in \widehat{\mathfrak{l}}_{-N\delta}$ and $x_{-N}^{\mathfrak{u}}\in {G}(\widehat{\mathfrak l})^{\perp}_-$. Then $x_{-N}^{\mathfrak{l}}w_1\neq 0$
  by Lemma \ref{lem-whitt} and $x_{-N}^{\mathfrak{u}}w_1\neq 0$ as the central charge is nonzero. Choose 
  $x_{N}^{\mathfrak{u}}\in {G}(\widehat{\mathfrak l})^{\perp}_+$ such that $[x_{N}^{\mathfrak{u}}, x_{-N}^{\mathfrak{u}}]=c$.
  Since 
  $[{G}(\widehat{\mathfrak l})^{\perp}_+, x_{-N}^{\mathfrak{l}}]=0$ we have 
  $$x_{N}^{\mathfrak{u}}x_{-N}w_1=x_{N}^{\mathfrak{u}}(x_{-N}^{\mathfrak{l}}+x_{-N}^{\mathfrak{u}})w_1=aw_1.$$
 Hence, $x_{-N}w_1\neq 0$.
  We claim now that
   $u_N v\neq 0$. Indeed, suppose $u_N v= 0$ for all  admissible $N$.   Then choose a nonzero $x_{N}\in \widehat{\mathfrak{g}}_{N\delta}$ such that $[x_{N}, x_{-N}]=c$. 
   Suppose that $[x_N, y_{N}^j]w_j= 0$ for all $j\geq 2$. Then $0=x_{N} u_N v=x_{N}x_{-N} w_1=a w_1$, which is a contradiction. Let $[x_N, y_{N}^j]w_j\neq 0$ for some $j\geq 2$ and assume $j=2$ for simplicity. Then
   $$0=x_N x_{-2N}w_1+\sum_{j\geq 2} x_N  y_{2N}^j w_j=\sum_{j\geq 2} [x_N,  y_{2N}^j] w_j=\sum_{j\geq 2} b_j  y_{N}^j w_j,$$ for some $b_j\in \mathbb C$ and $b_2\neq 0$. Hence $y_{N}^j w_j$, $j\geq 2$ are linearly dependent,  which is a contradiction. 
   
Suppose  now $\tau(v)>1$. Then the same argument as in the proof of  \cite[Lemma 5.3]{BBFK} shows that
there exists $u\in U(\widehat{\mathfrak u}_+)$  such that $u\cdot v\neq 0$ and $\tau(u \cdot v)< \tau(v)$. Then the proof is completed by induction. 
\end{proof}

\begin{remark} One can extend Lemma \ref{lem-whitt} and Theorem \ref{thm-main-2} to the case when the whole Heisenberg subalgebra $G$ is contained in $\widehat{\mathfrak l}$, that is ${G}(\widehat{\mathfrak l})^{\perp}\subset \widehat{\mathfrak l}$. This will follow from Theorem \ref{thm-main} as a particular case. 
\end{remark}

\medskip

\subsection{Inducing from Whittaker and evaluation modules}

In this section we assume that $\widehat{\mathfrak g}$ is a non-twisted Affine Lie algebra. 
We will extend the results of the previous section to the family of smooth inducing $\widehat{\mathfrak l}$-modules 
 which are the tensor products of Whittaker and evaluation modules \cite{GZ}.

Let ${\mathfrak l}_1\subset \mathfrak g$ be a semisimple Lie subalgebra, 
 $\widehat{\mathfrak l}=\widehat{\mathfrak l}_1\oplus \mathfrak h_{\mathfrak l}^{\perp}$, $\widehat{\mathfrak l}_1= \mathfrak l_1\otimes \mathbb C [t, t^{-1}]\oplus \mathbb C c \oplus \mathbb C d$ and  $\widehat{\mathfrak l}_2= \mathfrak l_1\otimes \mathbb C [t, t^{-1}]\oplus \mathbb C c$. 
For a nonzero homomorphism $\eta: U(\widehat{\mathfrak l}_1^+)\rightarrow \mathbb C$ and $a\in \mathbb C \setminus \{0\}$ let
 ${W}_{\eta, a}$  be an irreducible Whittaker  $\widehat{\mathfrak l}_2$-module    of type $\eta$  and central charge $a$. 
 
 For every positive integer $m$ consider $\mu=(\mu_1, \ldots, \mu_m)\in (\mathfrak {h}_{\mathfrak l}^*)^m\setminus \{0\}$ with all dominant integral $\mu_i$ and  a sequence $a_1, \ldots, a_m$ with all  nonzero  and pairwise distinct complex entries. Consider
 the evaluation $\widehat{\mathfrak l}_2$-module $E(\mu, a_1, \ldots, a_m)$ which is a tensor product of finite-dimensional irreducible $\mathfrak l_2$-modules with highest weights $\mu_1, \ldots, \mu_m$ and the action is defined as follows: 
 
$$(x\otimes t^n)(v_1\otimes \ldots \otimes v_m)=\sum_{i=1}^m a_i^n(v_1\otimes \ldots \otimes xv_i \otimes \ldots \otimes v_m).$$ 
 
Taking the tensor product ${W}_{\eta, a}\otimes E(\mu, a_1, \ldots, a_m)$ we get an $\widehat{\mathfrak l}_2$-module which  induces the following $\widehat{\mathfrak l}_1$-module \cite{GZ}:
 $$({W}_{\eta, a}\otimes E(\mu, a_1, \ldots, a_m))[d]=U(\widehat{\mathfrak l}_1)\otimes_{U(\widehat{\mathfrak l}_2)} ({W}_{\eta, a}\otimes E(\mu, a_1, \ldots, a_m)).$$ 

Theorem \ref{thm-main-2} can be easily extended to the inducing modules $({W}_{\eta, a}\otimes E(\mu, a_1, \ldots, a_m))[d]$. 

\medskip

\begin{theorem}\label{thm-main-3}
Let   $\lambda\in (\mathfrak h_{\mathfrak l}^{\perp})^*$, $a\in\mathbb C\setminus \{0\}$, $\eta\neq 0$,  $\mu=(\mu_1, \ldots, \mu_m)\in (\mathfrak {h}_{\mathfrak l}^*)^m\setminus \{0\}$ with all dominant integral $\mu_i$ and $a_1, \ldots, a_m$   pairwise distinct nonzero  numbers. Let  ${W}_{\eta, a}$  be an irreducible Whittaker  $\widehat{\mathfrak l}_2$-module    of type $\eta$  and central charge $a$. 
Define the action of $\mathfrak h_{\mathfrak l}^{\perp}$ on $({W}_{\eta, a}\otimes E(\mu, a_1, \ldots, a_m))[d]$ by $\lambda$ making it a $\widehat{\mathfrak p}$-module with trivial action of $\widehat{\mathfrak u}_+$.
  Then the induced 
 $\widehat{\mathfrak g}$-module $$M_{\widehat{\mathfrak p}}(({W}_{\eta, a}\otimes E(\mu, a_1, \ldots, a_m))[d])=U(\widehat{\mathfrak g})\otimes_{U(\widehat{\mathfrak p})} ({W}_{\eta, a}\otimes E(\mu, a_1, \ldots, a_m))[d]$$ is irreducible.

\end{theorem}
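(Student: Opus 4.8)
The plan is to deduce Theorem~\ref{thm-main-3} from the scheme already used for Theorem~\ref{thm-main-2}, after checking that the inducing module
$$M:=({W}_{\eta, a}\otimes E(\mu, a_1, \ldots, a_m))[d]$$
possesses precisely the two properties of $\widehat{W}_{\eta,a}$ that the earlier proof exploits. The first preliminary step is to record that $c$ acts on $M$ by the nonzero scalar $a$: it acts by $a$ on ${W}_{\eta,a}$ and by $0$ on the evaluation module $E(\mu,a_1,\ldots,a_m)$, hence by $a$ on the tensor product and on its induction $(\,\cdot\,)[d]$ to $\widehat{\mathfrak l}_1$. The second, and only nontrivial, external input is that $M$ is irreducible as an $\widehat{\mathfrak l}_1$-module under the stated hypotheses ($\eta\neq 0$, the $\mu_i$ dominant integral, the $a_i$ pairwise distinct and nonzero); this is the content of the construction in \cite{GZ}, which I would quote. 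I expect this irreducibility statement to be the main point on which the argument rests.

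With these two facts in hand I would transcribe the proof of Theorem~\ref{thm-main-2} verbatim, with $M$ in place of $\widehat{W}_{\eta,a}$. Thus I would write a nonzero $v\in M_{\widehat{\mathfrak p}}(M)$ by PBW as $v=\sum_i u_i w_i$ with the $u_i\in U(\widehat{\mathfrak u}_-)$ linearly independent, $w_i\in M\setminus\{0\}$, and $v$ an $\mathfrak h_{\mathfrak l}^{\perp}$-weight vector, and induct on the number $\tau(v)$ of transverse simple roots occurring in the $u_i$. The inductive step $\tau(v)>1$ is identical to \cite[Lemma 5.3]{BBFK}, and in the base case $\tau(v)=1$ one applies a transverse root vector $u_N\in \widehat{\mathfrak g}_{\beta+\alpha_1-(N+k_1)\delta}\subset\widehat{\mathfrak u}_+$ to obtain $u_N\cdot v=x_{-N}w_1+\sum_{j\ge 2}y_N^j w_j$ with $x_{-N}\in\widehat{\mathfrak g}_{-N\delta}$ and $y_N^j\in U(\widehat{\mathfrak l})$, exactly as before.

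The decisive observation, and the reason the evaluation factor causes no difficulty, is that every cancellation in this argument is performed with transverse imaginary root vectors lying in the Heisenberg $G(\widehat{\mathfrak l})^{\perp}$, and never with the Levi modes $G(\widehat{\mathfrak l})_+$ on which $E$ acts nontrivially. Indeed, since $\beta$ is transverse the component $x_{-N}^{\mathfrak u}\in G(\widehat{\mathfrak l})^{\perp}_-$ of $x_{-N}$ is nonzero, so one may choose $x_N^{\mathfrak u}\in G(\widehat{\mathfrak l})^{\perp}_+\subset\widehat{\mathfrak u}_+$ with $[x_N^{\mathfrak u},x_{-N}^{\mathfrak u}]=c$; using $[G(\widehat{\mathfrak l})^{\perp}_+,\widehat{\mathfrak l}]=0$ and $\widehat{\mathfrak u}_+\cdot M=0$ one gets
$$x_N^{\mathfrak u}\cdot(u_N\cdot v)=x_N^{\mathfrak u}x_{-N}w_1=aw_1\neq 0,$$
because $x_N^{\mathfrak u}$ annihilates each $w_i$ and commutes with $x_{-N}^{\mathfrak l}$ and with every $y_N^j\in U(\widehat{\mathfrak l})$. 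This simultaneously shows $u_N\cdot v\neq 0$ and produces the nonzero vector $aw_1\in M$ inside the submodule generated by $v$. The crucial feature is that $x_N^{\mathfrak u}\in G(\widehat{\mathfrak l})^{\perp}$ acts as zero on $M$ whatever the evaluation data are, so the nontrivial action of the positive Levi imaginary modes on $E$ is simply never invoked; the non-vanishing $x_{-N}w_1\neq 0$ comes, as in Theorem~\ref{thm-main-2}, from this central-charge pairing, and the analog of Lemma~\ref{lem-whitt} for $M$ persists unchanged since the evaluation factor contributes only semisimple finite-rank operators to the modes of $G(\widehat{\mathfrak l})_-$.

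Finally, the same mechanism applied to the remaining transverse imaginary degree reduces any nonzero $v$ to a nonzero vector of $M$ inside the submodule it generates; irreducibility of $M$ as an $\widehat{\mathfrak l}_1$-module then forces that submodule to contain $M$, and therefore all of $M_{\widehat{\mathfrak p}}(M)=U(\widehat{\mathfrak u}_-)\otimes M$. This proves the theorem. The only step genuinely beyond transcription is the irreducibility of the inducing module $M$ recorded in the first paragraph; everything else is the formal verification that the central-charge pairing can be carried out entirely inside the transverse Heisenberg $G(\widehat{\mathfrak l})^{\perp}$, which is insensitive to replacing the Whittaker module $\widehat{W}_{\eta,a}$ by its tensor product with an evaluation module.
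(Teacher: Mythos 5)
Your proposal is correct and matches the paper's own treatment, which consists precisely of citing \cite[Corollary 3.5]{GZ} for the irreducibility of the inducing module $({W}_{\eta,a}\otimes E(\mu,a_1,\ldots,a_m))[d]$ and then declaring the rest "analogous to the proof of Theorem \ref{thm-main-2}" with details left to the reader. You have in fact supplied more than the paper does, by verifying explicitly that the cancellation can be run entirely inside the transverse Heisenberg $G(\widehat{\mathfrak l})^{\perp}$ (which annihilates the inducing module and commutes with $U(\widehat{\mathfrak l})$), so that the evaluation factor never interferes.
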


\begin{proof}
Note that $({W}_{\eta, a}\otimes E(\mu, a_1, \ldots, a_m))[d]$ is irreducible $\widehat{\mathfrak l}_1$-module by \cite[Corollary 3.5]{GZ}. The proof is analogous to the proof of Theorem \ref{thm-main-2}. We leave the details to the reader. 

\end{proof}

\medskip

\subsection{Inducing from mixed tensor modules}

 From now on we assume that the Levi factor $\widehat{\mathfrak l}=\widehat{\mathfrak l}_1+ \mathfrak h_{\mathfrak l}^{\perp}$ of the parabolic subalgebra $\widehat{\mathfrak p}=\widehat{\mathfrak l}\oplus \widehat{\mathfrak u}_+$
 contains the whole Heisenberg subalgebra $G$.

Denote by $\widehat{\mathfrak l}_0$ the Lie subalgebra of $\widehat{\mathfrak l}$ generated by all its real root subspaces. Let $G({\widehat{\mathfrak l}})$ be the Lie subalgebra of $\widehat{\mathfrak l}_0$ spanned by its imaginary root subspaces,  $G(\widehat{\mathfrak l})^{\perp}$
  the orthogonal complement 
    of  $G(\widehat{\mathfrak l})$ in $G$: $G=G(\widehat{\mathfrak l})+G(\widehat{\mathfrak l})^{\perp}$, $[G(\widehat{\mathfrak l})^{\perp}, \widehat{\mathfrak l}_0]=0$ and  $ \widehat{\mathfrak l}_0\cap G(\widehat{\mathfrak l})^{\perp}=\mathbb C c$.

We have $$\widehat{\mathfrak l} =\widehat{\mathfrak l}_0+ G(\widehat{\mathfrak l})^{\perp}+ \mathfrak h_{\mathfrak l}^{\perp} + \mathbb C d=(\widetilde{\mathfrak l}_0+ \widetilde{G}(\widehat{\mathfrak l})^{\perp})\oplus \mathfrak h_{\mathfrak l}^{\perp},$$
where $\widetilde{\mathfrak l}_0=\widehat{\mathfrak l}_0 \oplus \mathbb C d$ and $ \widetilde{G}(\widehat{\mathfrak l})^{\perp}=G(\widehat{\mathfrak l})^{\perp}\oplus \mathbb C d$. The Lie algebras $G(\widehat{\mathfrak l})^{\perp}$ and  $\widetilde{G}(\widehat{\mathfrak l})^{\perp}$ inherit the following triangular decompositions 
$$G(\widehat{\mathfrak l})^{\perp}=G(\widehat{\mathfrak l})^{\perp}_{-}\oplus \mathbb C c\oplus G(\widehat{\mathfrak l})^{\perp}_{+}, \,\,\, \widetilde{G}(\widehat{\mathfrak l})^{\perp}=\widetilde{G}(\widehat{\mathfrak l})^{\perp}_{-}\oplus (\mathbb C c\oplus \mathbb C d)\oplus \widetilde{G}(\widehat{\mathfrak l})^{\perp}_+$$ from the corresponding decomposition of $G$. Here $G(\widehat{\mathfrak l})^{\perp}_{\pm}= \widetilde{G}(\widehat{\mathfrak l})^{\perp}_{\pm}=G(\widehat{\mathfrak l})^{\perp}\cap G_{\pm}.$

The following proposition shows how to construct irreducible $\widehat{\mathfrak l}$-modules.

\medskip

\begin{proposition}\label{prop-constr-irr}

\begin{itemize}
\item Let $\lambda\in (\mathfrak h_{\mathfrak l}^{\perp})^*$,  $M$ and $S$ are irreducible modules over $\widehat{\mathfrak l}_0$ and  $G(\widehat{\mathfrak l})^{\perp}$ respectively with the same central charge $a\in\mathbb C$. Then $M\otimes S$ is 
an irreducible $(\widehat{\mathfrak l}_0+ G(\widehat{\mathfrak l})^{\perp}+ \mathfrak h_{\mathfrak l}^{\perp})$-module with central charge $a$ and with the action of $\mathfrak h_{\mathfrak l}^{\perp}$ defined by $\lambda$: $h(v\otimes w)=\lambda(h)(v\otimes w)$ for all $h\in \mathfrak h_{\mathfrak l}^{\perp}$, $v\in M$, $w\in S$.
\item Let $V$ be an irreducible  $(\widehat{\mathfrak l}_0+ G(\widehat{\mathfrak l})^{\perp}+ \mathfrak h_{\mathfrak l}^{\perp})$-module with central charge $a$ and with the action of $\mathfrak h_{\mathfrak l}^{\perp}$ defined by some $\lambda\in (\mathfrak h_{\mathfrak l}^{\perp})^*$. Suppose $S$ is an irreducible 
 $G(\widehat{\mathfrak l})^{\perp}$-submodule of $V$. Then $V\simeq M\otimes S$ for some irreducible  $\widehat{\mathfrak l}_0$-module $M$ with central charge $a$. 
\item  Let $\lambda\in (\mathfrak h_{\mathfrak l}^{\perp})^*$,  $M$  a weight   $\widetilde{\mathfrak l}_0$-module which irreducible as $\widehat{\mathfrak l}_0$-module  with  central charge $a\in\mathbb C\setminus \{0\}$,  $S$ an irreducible Whittaker $ \widetilde{G}(\widehat{\mathfrak l})^{\perp}$-module of type $\eta$ with $\eta |_{{G}(\widehat{\mathfrak l})^{\perp}\cap \widehat{\mathfrak g}_{n\delta}}\neq 0$ for infinitely many integer $n>0$ and the same central charge. Then $M\otimes S$ is 
an irreducible $\widehat{\mathfrak l}$-module with central charge $a$, with the action of $\mathfrak h_{\mathfrak l}^{\perp}$ defined by $\lambda$ and with the tensor product action of $d$.

\end{itemize}
\end{proposition}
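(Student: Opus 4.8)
The plan is to treat the three items in increasing order of difficulty, reducing the third (the essential one) to the first. First I would check that $M\otimes S$ is a genuine module over the relevant Lie algebra. The only nonformal points are that the shared central element $c$ acts by the single scalar $a$ (not $2a$), which is forced because the commutators landing in $\mathbb C c$ occur separately inside $\widehat{\mathfrak l}_0$ and inside $G(\widehat{\mathfrak l})^{\perp}$ and never across the two, and---for the third item---that the coproduct rule $d(v\otimes w)=(dv)\otimes w+v\otimes(dw)$ is compatible with $[d,x]=nx$ for homogeneous $x$; this is a one-line check on each factor.

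For item one, write $\mathfrak a=\widehat{\mathfrak l}_0+G(\widehat{\mathfrak l})^{\perp}+\mathfrak h_{\mathfrak l}^{\perp}$. The two subalgebras $\widehat{\mathfrak l}_0$ and $G(\widehat{\mathfrak l})^{\perp}$ commute and meet only in $\mathbb C c$, while $\mathfrak h_{\mathfrak l}^{\perp}$ is central and acts by $\lambda$, so $\mathfrak a$ is essentially the amalgamated product of the two algebras over their common central charge. Since $\widehat{\mathfrak l}_0$ is countable dimensional and $M$ is irreducible, Dixmier's form of Schur's lemma gives $\operatorname{End}_{\widehat{\mathfrak l}_0}(M)=\mathbb C$; the standard tensor-product argument (pick $0\ne\sum_{i=1}^n v_i\otimes s_i$ in a submodule with $n$ minimal and the $s_i$ linearly independent, use Jacobson density on the $M$-factor to isolate a single $v\otimes s$, then spread out using irreducibility of $S$ and of $M$) then shows $M\otimes S$ is $\mathfrak a$-irreducible. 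Item two is the matching factorization statement: set $M=\operatorname{Hom}_{G(\widehat{\mathfrak l})^{\perp}}(S,V)$, which is naturally an $\widehat{\mathfrak l}_0$-module because $\widehat{\mathfrak l}_0$ commutes with $G(\widehat{\mathfrak l})^{\perp}$; the evaluation map $M\otimes S\to V$ is a nonzero $\mathfrak a$-homomorphism, hence surjective by irreducibility of $V$, and injective once one knows $M$ is $\widehat{\mathfrak l}_0$-irreducible and invokes item one.

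The heart of the proposition is item three, and my plan is to reduce it to item one. Let $w_0\in S$ be the Whittaker vector and put $S_0=U(G(\widehat{\mathfrak l})^{\perp})w_0$; by Proposition \ref{prop-whitt-notgraded} (with $a\ne0$) this is the irreducible Whittaker $G(\widehat{\mathfrak l})^{\perp}$-module, and by Proposition \ref{prop-whitt-graded} the full module is free over $U(G(\widehat{\mathfrak l})^{\perp}_-\oplus\mathbb C d)$, so $S=\mathbb C[d]\otimes S_0$. Crucially, $\widetilde{\mathfrak l}_0$ and $\widetilde{G}(\widehat{\mathfrak l})^{\perp}$ share not only $c$ but also the derivation $d$, which acts nontrivially; this is exactly why item one does not apply directly and why a separate argument is needed. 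Filtering by the degree in $d$ inside the $S$-factor, $F_k=M\otimes\bigl(\bigoplus_{j\le k}d^jS_0\bigr)$ is a chain of $\mathfrak a$-submodules whose subquotients are all isomorphic to the $\mathfrak a$-irreducible module $M\otimes S_0$ of item one. Given a nonzero $\widehat{\mathfrak l}$-submodule $W$, I would first show $W\cap(M\otimes S_0)\ne0$; then $W\supseteq M\otimes S_0$ by $\mathfrak a$-irreducibility of the bottom layer, and finally, using $(dv)\otimes s\in M\otimes S_0$ for $s\in S_0$ together with the coproduct formula, the relation $v\otimes(ds)=d(v\otimes s)-(dv)\otimes s$ shows $v\otimes d^ks\in W$ for all $k$, so $W=M\otimes S$.

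The main obstacle is the first of these steps, $W\cap(M\otimes S_0)\ne0$, i.e.\ lowering the $d$-degree of an arbitrary nonzero element of $W$ down to zero. The operators that do this are $y-\eta(y)$ for positive modes $y\in G(\widehat{\mathfrak l})^{\perp}_+$ with $\eta(y)\ne0$: acting on the $S$-factor, $y-\eta(y)$ annihilates $w_0$ and strictly lowers the $U(G(\widehat{\mathfrak l})^{\perp}_-)$-length, while the identity $y\,d^k=(d-n)^k y$ (for $y$ of degree $n$) shows that it does not raise the degree in $d$ and in fact kills the top $d$-coefficient once the leading $S_0$-component has been reduced to a multiple of $w_0$. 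The hypothesis that $\eta$ is nonzero on $G(\widehat{\mathfrak l})^{\perp}\cap\widehat{\mathfrak g}_{n\delta}$ for infinitely many $n>0$ guarantees a supply of such modes in arbitrarily high degree, which---exactly as in Lemma \ref{lem-whitt} and the proof of Theorem \ref{thm-main-2}---is what lets one carry out the reduction across all $M$-components simultaneously without annihilating the element, the nonvanishing at each stage being secured by $a\ne0$ through a commutator pairing $[y,\,\cdot\,]=c$. Organizing this bookkeeping so that a single element of $W$ is driven into $M\otimes S_0$ while staying nonzero is the one genuinely delicate part; everything else is formal or a direct transcription of the arguments already established earlier in the paper.
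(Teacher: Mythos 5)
Your argument follows the paper's proof in all essentials: item one is the standard tensor-product-of-irreducibles argument over the commuting quotient algebras $U(\widehat{\mathfrak l}_0)/(c-a)$ and $U(G(\widehat{\mathfrak l})^{\perp})/(c-a)$, and item three is proved, exactly as in the paper, by driving a nonzero element of a submodule down to $d$-degree zero with the operators $x-\eta(x)$ for high imaginary modes where $\eta$ is nonzero, landing in $M\otimes M_{\eta,a}$, using irreducibility there, and then recovering all of $M\otimes S$ by applying $d$; your filtration $F_k$ is just a cleaner packaging of the paper's direct computation. The only divergences are minor: for item two the paper simply cites \cite[Lemma 2.2]{LX} while you reprove it via $M=\operatorname{Hom}_{G(\widehat{\mathfrak l})^{\perp}}(S,V)$, which is fine, and in the $d$-degree reduction the nonvanishing at each step actually comes from $\eta(x)\neq 0$ together with freeness of $S$ over $U(G(\widehat{\mathfrak l})^{\perp}_-\oplus\mathbb C d)$, while $a\neq 0$ enters only through the irreducibility of the bottom layer $M_{\eta,a}$, not through a commutator pairing at that stage.
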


\begin{proof}
If $M$ and $S$ have central charge $a$ then they are irreducible modules over $U(\widehat{\mathfrak l}_0)/(c-a)$ and  $U(G(\widehat{\mathfrak l})^{\perp})/(c-a)$ respectively. Hence, $M\otimes S$ is an irreducible  $U(\widehat{\mathfrak l}_0 + G(\widehat{\mathfrak l})^{\perp})/(c-a)$-module and the first statement is clear. 
The second statement follows from \cite[Lemma 2.2]{LX}.

For the third statement, suppose $M\otimes S$ contains a nonzero  $\widehat{\mathfrak l}$-submodule $D$ and
consider any nonzero element $v\in D$. Then
$$v=\sum_{i\in I} v_i\otimes w_i,$$
for some finite set $I$, where $v_i\in M$ and $w_i\in S$. We can assume that $w_i=d^{k_i}z_i$, $i\in I$, where $z_i\in U({G}(\widehat{\mathfrak l})^{\perp}_-)$, $k_i\geq 0$, and the elements $v_i\otimes w_i$ are linearly independent.  If $i\in I$ and $n>0$ then for any nonzero
$x\in \widetilde{G}(\widehat{\mathfrak l})^{\perp}\cap \widehat{\mathfrak g}_{n\delta}$ such that $[x, z_i]=0$,  we have
$$(x-\eta(x))(d^{k_i}z_i)=\sum_{j=1}^{k_i-1}\eta(x)\lambda_j d^j z_i$$
for some integers $\lambda_j$.  Note that the degree of $d$ in the right hand side is smaller that in the original element. 
Since  the restrictions of $\eta$ on ${G}(\widehat{\mathfrak l})^{\perp}\cap{\widehat{\mathfrak g}_{n\delta}}$ are nonzero for infinitely many $n>0$, there exists $u\in U(\widetilde{G}(\widehat{\mathfrak l})^{\perp}_+)$ such that 
$$v'=u\cdot v=\sum_{i\in I'} v_i'\otimes w_i',$$
for some new set of indices $I'$, $v_i'\in M$ and  $w_i'\in M_{\eta, a}$. Again we assume that all $v_i'\otimes w_i'$ 
are linearly independent.
As the Whittaker ${G}(\widehat{\mathfrak l})^{\perp}$-module $M_{\eta, a}$ is irreducible, we find 
$u'\in U(\widetilde{G}(\widehat{\mathfrak l})^{\perp}_+)$ such that 
$$u'\cdot v'=\sum_{i\in I''} v_i''\otimes \bar{1},$$
where $\bar{1}$ is the generator of $S$,  for some  set of indices $I''$ and some  linearly independent elements $v_i''\in M$. Then $u'\cdot v'\in D$ is nonzero. We see that $D$ contains a nonzero element $m\otimes \bar{1}$
with $m\in M$.
Then $U(\widehat{\mathfrak l}_0)(m)\otimes \bar{1}=M\otimes \bar{1}\subset D$  (recall that $M$  is irreducible as $\widehat{\mathfrak l}_0$-module). Finally we have
$$U(\widetilde{G}(\widehat{\mathfrak l})^{\perp})(M\otimes \bar{1})=M\otimes S\subset D,$$
implying that $D=M\otimes S$. Hence $M\otimes S$ is irreducible.

\end{proof}

\begin{remark}
Examples of weight   $\widetilde{\mathfrak l}_0$-module which are irreducible as $\widehat{\mathfrak l}_0$-module  include  irreducible Verma modules, imaginary Verma modules with nonzero central charge, and modules induced from irreducible diagonal $G$-modules with nonzero central charge, among the others.
\end{remark}

\begin{remark}
If $M$ is a weight   $\widetilde{\mathfrak l}_0$-module and $S$ is a weight $G(\widehat{\mathfrak l})^{\perp}$-module with the same central charge then $M\otimes S$ is a tensor $\widetilde{\mathfrak l}$-module \cite{FK3}, \cite{GKMOS}. Modules induced from weight tensor modules were studied in \cite{GKMOS}. Inspired by \cite{GKMOS} we will consider a different family of inducing modules which we call \emph{mixed tensor modules}. These are modules of the form 
$M\otimes S$ where  $M$ is an  $\widetilde{\mathfrak l}_0$-module and $S$ is a Whittaker  $G(\widehat{\mathfrak l})^{\perp}$-module with the same central charge. 

\end{remark}

Let $\eta: U(\widetilde{G}(\widehat{\mathfrak l})^{\perp}_+)\rightarrow \mathbb C$ be an algebra homomorphism, 
$a\in\mathbb C\setminus \{0\}$ and
  $S_{\eta, a}$ a Whittaker $ \widetilde{G}(\widehat{\mathfrak l})^{\perp}$-module of type $\eta$ and central charge $a$.

The following theorem  generalizes Theorem \ref{thm-imag-Heis} for arbitrary parabolic subalgebra and for mixed tensor modules. Note that we are not imposing any conditions on $\widehat{\mathfrak l}_0$-module $M$.

\medskip

\begin{theorem}\label{thm-main}
Let   $\lambda\in (\mathfrak h_{\mathfrak l}^{\perp})^*$, $a\in\mathbb C\setminus \{0\}$, $M$  an   $\widehat{\mathfrak l}_0$-module  with  central charge $a$. 
Consider a mixed tensor $\widehat{\mathfrak l}$-module $M\otimes S_{\eta, a}$ on which 
 $\mathfrak h_{\mathfrak l}^{\perp}$ acts via $\lambda$ and $d$ has a tensor product action. Then the induced 
 $\widehat{\mathfrak g}$-module $$M_{\widehat{\mathfrak p}}(M\otimes S_{\eta, a})=U(\widehat{\mathfrak g})\otimes_{U(\widehat{\mathfrak p})} (M\otimes S_{\eta, a})$$ is irreducible if and only if $M\otimes S_{\eta, a}$
is irreducible $\widehat{\mathfrak l}$-module.
\end{theorem}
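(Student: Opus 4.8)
The statement is an equivalence, and I would establish the two implications by quite different means. \emph{Necessity} (induced irreducible $\Rightarrow$ Levi module irreducible) I would prove by contraposition using exactness of parabolic induction. Since $\widehat{\mathfrak g}=\widehat{\mathfrak u}_-\oplus\widehat{\mathfrak p}$, the PBW theorem makes $U(\widehat{\mathfrak g})$ free as a right $U(\widehat{\mathfrak p})$-module, so the functor $M_{\widehat{\mathfrak p}}(-)$ is exact and carries a proper nonzero $\widehat{\mathfrak l}$-submodule $D\subsetneq M\otimes S_{\eta,a}$ to the proper nonzero $\widehat{\mathfrak g}$-submodule $U(\widehat{\mathfrak u}_-)\otimes D$ of $M_{\widehat{\mathfrak p}}(M\otimes S_{\eta,a})$. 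Hence reducibility of the inducing module forces reducibility of the induced module, which is all that this direction requires.

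For \emph{sufficiency} I would assume $M\otimes S_{\eta,a}$ irreducible and show that every nonzero $v\in M_{\widehat{\mathfrak p}}(M\otimes S_{\eta,a})$ generates the whole module. Because $\widehat{\mathfrak l}$ contains the entire Heisenberg subalgebra $G$, the radicals $\widehat{\mathfrak u}_\pm$ consist of real root spaces only. Writing $v=\sum_i u_i m_i$ with $u_i\in U(\widehat{\mathfrak u}_-)$ linearly independent, $0\neq m_i\in M\otimes S_{\eta,a}$, and $v$ an $\mathfrak h_{\mathfrak l}^\perp$-weight vector, let $\tau(v)$ count the non-$\mathfrak l$ simple roots of $\mathfrak g$ occurring in each $u_i\in U(\widehat{\mathfrak g})_{-\phi_i+k_i\delta}$; this is independent of $i$ since $\mathfrak h_{\mathfrak l}^\perp$ annihilates the root spaces inside $\widehat{\mathfrak l}$, so the $\mathfrak h_{\mathfrak l}^\perp$-weight records exactly this count. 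As $\widehat{\mathfrak u}_-$ is purely real, $\tau(v)=0$ holds precisely when $v\in 1\otimes(M\otimes S_{\eta,a})$. I would then induct on $\tau(v)$: in the base case $\tau(v)=0$ the hypothesis applies directly, for $U(\widehat{\mathfrak l})v=1\otimes(M\otimes S_{\eta,a})$ by irreducibility of the Levi module, and $U(\widehat{\mathfrak u}_-)\bigl(1\otimes(M\otimes S_{\eta,a})\bigr)=M_{\widehat{\mathfrak p}}(M\otimes S_{\eta,a})$ by PBW.

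For the inductive step, when $\tau(v)\geq1$ I would produce $u\in U(\widehat{\mathfrak u}_+)$ with $uv\neq0$ and $\tau(uv)<\tau(v)$, following the proof of Theorem \ref{thm-main-2}. When $\tau(v)=1$ each $u_i$ is a single root vector with $\phi_i=\beta+\alpha_i$, where $\beta$ is the unique non-$\mathfrak l$ simple root and $\alpha_i$ an $\mathfrak l$-combination; fixing the index $1$ with $\tau_1(v)$ and then $k_1$ minimal and acting by a root vector $u_N\in\widehat{\mathfrak g}_{(\beta+\alpha_1)-(N+k_1)\delta}\subset\widehat{\mathfrak u}_+$ of large imaginary degree annihilates each $m_i$, so $u_N v=\sum_i[u_N,u_i]m_i$ with every $[u_N,u_i]\in\widehat{\mathfrak l}$. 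Thus $u_N v\in 1\otimes(M\otimes S_{\eta,a})$ already has $\tau=0$, with leading term $[u_N,u_1]=x_{-N}\in\widehat{\mathfrak g}_{-N\delta}\subset G$. For $\tau(v)>1$ I would invoke the combinatorial reduction of \cite[Lemma 5.3]{BBFK} verbatim to descend to $\tau(v)-1$.

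The main obstacle, and the sole place where $a\neq0$ is used, is the nonvanishing of $u_N v$ when $M$ is an \emph{arbitrary} $\widehat{\mathfrak l}_0$-module, so that no torsion-freeness is available on the $M$-factor (contrast Lemma \ref{lem-whitt}). The decisive observation is that $x_{-N}=[u_N,u_1]$ is a multiple of the coroot element $h_{\beta+\alpha_1}\otimes t^{-N}$, and since $\beta+\alpha_1$ is not an $\mathfrak l$-root its coroot is not orthogonal to $\mathfrak h_{\mathfrak l}^\perp$; hence the component $x^{(\perp)}_{-N}$ of $x_{-N}$ in $G(\widehat{\mathfrak l})^\perp_-$ is nonzero. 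Choosing $x^{(\perp)}_N\in G(\widehat{\mathfrak l})^\perp_+$ with $[x^{(\perp)}_N,x^{(\perp)}_{-N}]=c$ and using $[G(\widehat{\mathfrak l})^\perp,\widehat{\mathfrak l}_0]=0$ together with the Whittaker action of $G(\widehat{\mathfrak l})^\perp$ on the $S_{\eta,a}$-factor, a direct computation yields $(x^{(\perp)}_N-\eta(x^{(\perp)}_N))\,x_{-N}m_1=a\,m_1\neq0$, so $x_{-N}m_1\neq0$. A final application of a suitable $x_N\in\widehat{\mathfrak g}_{N\delta}$, played against the minimality of $(\tau_1,k_1)$ and the linear independence of the $m_i$, rules out cancellation of the remaining terms $[u_N,u_i]m_i$ for $i\geq2$, exactly as in Theorem \ref{thm-main-2}, forcing $u_N v\neq0$. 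Since this argument uses only the $S_{\eta,a}$-factor and $a\neq0$, no condition on $M$ is needed, which is precisely the strength of the statement; the induction then terminates at $\tau=0$ and the base case completes the proof.
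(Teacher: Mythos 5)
Your overall architecture matches the paper's: necessity via exactness of parabolic induction, sufficiency by induction on $\tau(v)$ with the case $\tau(v)>1$ deferred to \cite[Lemma 5.3]{BBFK}, and, in the case $\tau(v)=1$, the reduction to showing that $u_N v$, which lands in $1\otimes(M\otimes S_{\eta,a})$, is nonzero. Your ``decisive observation'' that $[u_N,u_1]\in\widehat{\mathfrak g}_{-N\delta}$ has nonzero component in $G(\widehat{\mathfrak l})^{\perp}_-$ (because $\beta+\alpha_1$ does not lie in the span of the roots of $\mathfrak l$) is correct and is indeed where the central charge enters; the paper uses it implicitly when it picks $y_N^{(2)}$ with $[y_N^{(2)},y_{-N}^{(2)}]=Nc$.

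The gap is in your final step. Proving $x_{-N}m_1\neq 0$ does not prove $u_Nv=\sum_i[u_N,u_i]m_i\neq 0$, and your proposal to rule out cancellation among the remaining terms ``exactly as in Theorem \ref{thm-main-2}'' does not transfer to the present setting. That argument applies an element $x_N\in\widehat{\mathfrak g}_{N\delta}$ and controls all resulting terms because the inducing module there is a Whittaker module over the whole Levi factor: $x_Nw_i=0$ for large $N$, $G(\widehat{\mathfrak l})_-$ acts torsion-freely by Lemma \ref{lem-whitt}, and the terms $[x_N,y_N^j]w_j$ remain inside a controlled family whose linear (in)dependence can be exploited. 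In Theorem \ref{thm-main} the module $M$ is an \emph{arbitrary} $\widehat{\mathfrak l}_0$-module, so the component $x_N^{(0)}\in G(\widehat{\mathfrak l})\subset\widehat{\mathfrak l}_0$ of any $x_N\in\widehat{\mathfrak g}_{N\delta}$ acts on the first tensor factors $v_i$, $z_kv_k$, $y^{(1)}_{-N}v_t$ without any constraint, and none of the vanishing or dependence statements from Theorem \ref{thm-main-2} are available. The paper's resolution is to apply the operator $y_N^{(2)}-\eta(y_N^{(2)})$, with $y_N^{(2)}\in\widetilde{G}(\widehat{\mathfrak l})^{\perp}_+$, to the \emph{whole} element $u\cdot v$: every summand whose $S_{\eta,a}$-factor is an unperturbed $w_{\bullet}$ is annihilated (for $N$ large each $w_{\bullet}$ is an $\eta$-eigenvector of $y_N^{(2)}$), every summand whose $S$-factor is perturbed in imaginary degree different from $-N$ is annihilated by the Heisenberg relations, and what survives is $Na\sum_{t\in T}v_t\otimes w_t$, which is nonzero because the $u_t$, $t\in T$, are proportional and the $u_i(v_i\otimes w_i)$ were chosen linearly independent. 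You have already carried out the relevant commutator computation on the single term $x_{-N}m_1$; what is missing is performing it globally on $u\cdot v$, which is precisely the step that replaces the inapplicable argument of Theorem \ref{thm-main-2}. (A cosmetic difference: the paper takes $k_{i_0}$ \emph{largest} rather than smallest, but since $[\widehat{\mathfrak g}_{N\delta},\widehat{\mathfrak g}_{(-N+s)\delta}]=0$ for $s\neq 0$, either normalization works once the global argument is in place.)
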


\begin{proof}  
If $M\otimes S_{\eta, a}$ is not irreducible $\widehat{\mathfrak l}$-module then clearly $M_{\widehat{\mathfrak p}}(M\otimes S_{\eta, a})$ is not irreducible by the basic properties of induced modules. Conversely, 
assume that a mixed tensor $\widehat{\mathfrak l}$-module $M\otimes S_{\eta, a}$ is irreducible. 

 Let $v\in M_{\widehat{\mathfrak p}}(M\otimes S_{\eta, a})$ be a nonzero element. Then $v$ can be written as follows:
$$v=\sum_{i} u_i(v_i \otimes w_i),$$ for some
 $u_i\in U(\widehat{\mathfrak{u}}_-)$,  $v_i\in M$,      $w_i\in S_{\eta, a}$. We assume that the elements $u_i(v_i \otimes w_i)$ in the decomposition of $v$ are linearly independent.
 We can also assume that $v$ is $\mathfrak h_{\mathfrak l}^{\perp}$-weight element and hence  
 $u_i\in U(\widehat{\mathfrak{g}})_{-\varphi_i + k_i \delta}$ or each $i$, for some  $\varphi_i$ in the root lattice of $\mathfrak g$ and some integer $k_i$. Each $\varphi_i$ can be written as a sum of simple roots of $\mathfrak g$. Denote by  $\tau (\varphi_i)$ the total number of simple roots of $\mathfrak g$ in $\varphi_i$ which are not the roots of $\widehat{\mathfrak l}$.  Clearly $\tau (\varphi_i)=\tau (\varphi_j)$ for all $i,j$   since $v$ is a weight element.
 We denote this number by $\tau (v)$. Suppose that $\tau (v)=1$, that is $\varphi_i$ contains a unique simple root 
  which is not a root of $\widehat{\mathfrak l}$, and for all $i$ this root is the same. 
  As with the weight modules (see \cite{FK3}, \cite{GKMOS}) this the most difficult case which requires a special treatment. 
  
  Choose any index $i_0$ for which  
   $k_{i_0}$ is the largest (there could be several indices like that, in this case choose any of them),  
  and
  choose sufficiently large positive $N$. Let  $u\in \widehat{\mathfrak{g}}_{\varphi_{i_0} - (N+k_{i_0}) \delta}$ be a nonzero element. We have
 $$u\cdot v=\sum_{t\in T}y_{-N}(v_t\otimes w_t) +  \sum_{j\in J} y_{-N-s_j}(v_j\otimes w_j)+ \sum_{k\in K}  z_k(v_k\otimes w_k),$$
 where $T,J,K$ some sets of indices ($i_0\in T$), $s_j >0$, $y_{-N-s_j}\in \widehat{\mathfrak{g}}_{(-N-s_j)\delta}$, $z_k\in U(\widehat{\mathfrak l}_0)$.
 Moreover, each $y_{N-s_j}$ can be written as $y_{-N-s_j}^{(1)}+y_{-N-s_j}^{(2)}$, where  $y_{-N-s_j}^{(1)}\in \widehat{\mathfrak l}_0$ and  $y_{-N-s_j}^{(2)}\in \widetilde{G}(\widehat{\mathfrak l})^{\perp}_-$.   Hence we have
 $$u\cdot v=\sum_{t\in T}y_{-N}^{(1)}v_t\otimes w_t + \sum_{t\in T}v_t\otimes y_{-N}^{(2)}w_t + \sum_{j\in J} y_{-N-s_j}^{(1)}v_j\otimes w_j+ \sum_{j\in J} v_j\otimes y_{-N-s_j}^{(2)} w_j +
  \sum_{k\in K}  z_kv_k\otimes w_k.$$
 Since $u\cdot v\in M\otimes S_{\eta, a}$ we just need to show 
  that $u\cdot v\neq 0$ for some sufficiently large $N$. Then irreducibility follows. Note that $y_{-N}^{(2)}w_t\neq 0$  and $ y_{-N-s_j}^{(2)} w_j \neq 0$ for all $t\in T$,  $j\in J$ by Proposition \ref{prop-whitt-notgraded}.  Choose $y_{N}^{(2)}\in \widetilde{G}(\widehat{\mathfrak l})^{\perp}_+$ such that $[y_{N}^{(2)}, y_{-N}^{(2)}]=Nc$   and $y_{N}^{(2)} w_s=\eta(y_{N}^{(2)})w_s$ for all $s\in T\cup J$. 
   Then 
  $$y_{N}^{(2)}(u\cdot v)=\sum_{t\in T}y_{-N}^{(1)}v_t\otimes y_{N}^{(2)}w_t + \sum_{t\in T}v_t\otimes y_{N}^{(2)}y_{-N}^{(2)}w_t + \sum_{j\in J} y_{-N-s_j}^{(1)}v_j\otimes y_{N}^{(2)}w_j+ $$
  $$+\sum_{j\in J} v_j\otimes y_{N}^{(2)}y_{-N-s_j}^{(2)} w_j +
  \sum_{k\in K}  z_kv_k\otimes y_{N}^{(2)}w_k= \eta(y_{N}^{(2)})\sum_{t\in T}y_{-N}^{(1)}v_t\otimes w_t + \eta(y_{N}^{(2)})\sum_{t\in T}v_t\otimes y_{-N}^{(2)}w_t + $$  
 $$ +Na\sum_{t\in T}v_t\otimes w_t+ \eta(y_{N}^{(2)})\sum_{j\in J} y_{-N-s_j}^{(1)}v_j\otimes w_j   + \eta(y_{N}^{(2)})\sum_{j\in J} v_j\otimes y_{-N-s_j}^{(2)} w_j +$$
$$+
\eta(y_{N}^{(2)})  \sum_{k\in K}  z_kv_k\otimes w_k = \eta(y_{N}^{(2)}) (u\cdot v) + Na\sum_{t\in T}v_t\otimes w_t.$$
  
  Suppose that $u\cdot v=0$. Then $\sum_{t\in T}v_t\otimes w_t=0.$ But $u_i=u_j$ for all $i,j\in T$ implying that
  $\{u_i(v_i\otimes w_i), \, i\in T\}$ are linearly dependent, which is a contradiction. Hence $u\cdot v\neq 0$. 
  This comletes the proof in the case $\tau (v)=1$. If 
 $\tau(v)>1$ then by the argument  in the proof of  \cite[Lemma 5.3]{BBFK} one can easily find
  $u\in U(\widehat{\mathfrak u}_+)$ such that $u\cdot v\neq 0$ and $\tau(u\cdot v)< \tau(v)$. Then we complete the proof  by induction on $ \tau(v)$.

\end{proof} 
 
 In particular Theorem \ref{thm-main} is applied in the case of mixed tensor modules with weight $\widetilde{\mathfrak l}_0$-module  $M$.
 
 \begin{corollary}\label{cor-mixed} Let   $\lambda\in (\mathfrak h_{\mathfrak l}^{\perp})^*$, $a\in\mathbb C\setminus \{0\}$ and
 $M$  a weight  $\widetilde{\mathfrak l}_0$-module which is irreducible as $\widehat{\mathfrak l}_0$-module  with  central charge $a$.  If  $\eta |_{\widetilde{G}(\widehat{\mathfrak l})^{\perp}_+\cap \widehat{\mathfrak g}_{n\delta}}\neq 0$ for infinitely many integers $n>0$ then the induced 
 $\widehat{\mathfrak g}$-module $$M_{\widehat{\mathfrak p}}(M\otimes S_{\eta, a})=U(\widehat{\mathfrak g})\otimes_{U(\widehat{\mathfrak p})} (M\otimes S_{\eta, a})$$ is irreducible.
\end{corollary}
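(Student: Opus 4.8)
The plan is to derive the corollary by combining the general irreducibility criterion of Theorem \ref{thm-main} with the construction of irreducible mixed tensor $\widehat{\mathfrak l}$-modules furnished by the third item of Proposition \ref{prop-constr-irr}. Since Theorem \ref{thm-main} reduces the irreducibility of $M_{\widehat{\mathfrak p}}(M\otimes S_{\eta,a})$ to the irreducibility of $M\otimes S_{\eta,a}$ as an $\widehat{\mathfrak l}$-module, the whole task is to certify that $M\otimes S_{\eta,a}$ is irreducible under the present hypotheses; nothing new about the induction itself needs to be proved.

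First I would observe that $\widetilde{G}(\widehat{\mathfrak l})^{\perp}=G(\widehat{\mathfrak l})^{\perp}\oplus\mathbb C d$ is a Heisenberg subalgebra extended by the derivation $d$, carrying the triangular decomposition $\widetilde{G}(\widehat{\mathfrak l})^{\perp}_-\oplus(\mathbb C c\oplus\mathbb C d)\oplus\widetilde{G}(\widehat{\mathfrak l})^{\perp}_+$ inherited from $G$. Hence Proposition \ref{prop-whitt-graded} applies verbatim with $G$ replaced by $G(\widehat{\mathfrak l})^{\perp}$: since $a\neq 0$ and $\eta|_{\widetilde{G}(\widehat{\mathfrak l})^{\perp}_+\cap\widehat{\mathfrak g}_{n\delta}}\neq 0$ for infinitely many $n>0$, the module $S_{\eta,a}$ is an irreducible Whittaker $\widetilde{G}(\widehat{\mathfrak l})^{\perp}$-module of type $\eta$ and central charge $a$ (indeed the unique one up to isomorphism). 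Using the identification $\widetilde{G}(\widehat{\mathfrak l})^{\perp}_+\cap\widehat{\mathfrak g}_{n\delta}=G(\widehat{\mathfrak l})^{\perp}\cap\widehat{\mathfrak g}_{n\delta}$, the nonvanishing hypothesis stated in the corollary is exactly the one required by the third item of Proposition \ref{prop-constr-irr}.

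Next I would feed these data into that third item, taking $S=S_{\eta,a}$: the hypotheses are met, as $M$ is a weight $\widetilde{\mathfrak l}_0$-module which is irreducible as $\widehat{\mathfrak l}_0$-module with central charge $a\neq 0$, and $S_{\eta,a}$ is an irreducible Whittaker $\widetilde{G}(\widehat{\mathfrak l})^{\perp}$-module of type $\eta$ with the same central charge and the required nonvanishing. The proposition then gives that $M\otimes S_{\eta,a}$, with $\mathfrak h_{\mathfrak l}^{\perp}$ acting through $\lambda$ and $d$ acting diagonally on the tensor product, is an irreducible $\widehat{\mathfrak l}$-module. Finally I would invoke the nontrivial direction of Theorem \ref{thm-main}: irreducibility of $M\otimes S_{\eta,a}$ as an $\widehat{\mathfrak l}$-module forces $M_{\widehat{\mathfrak p}}(M\otimes S_{\eta,a})$ to be irreducible, which is the assertion.

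I do not expect a genuine obstacle here, since all the delicate work is already discharged upstream: the inductive reduction on $\tau(v)$ and the $\tau(v)=1$ base case handled via the pairing $[y_N^{(2)},y_{-N}^{(2)}]=Nc$ live inside the proof of Theorem \ref{thm-main}, and the tensor-decomposition argument lives inside Proposition \ref{prop-constr-irr}. The only points deserving care are bookkeeping ones: checking that the two formulations of the nonvanishing condition on $\eta$ (over $\widetilde{G}(\widehat{\mathfrak l})^{\perp}_+$ in the corollary and over $G(\widehat{\mathfrak l})^{\perp}$ in the proposition) coincide, and noting that the irreducibility of the Whittaker factor $S_{\eta,a}$, which Theorem \ref{thm-main} does not presuppose, is precisely what the hypothesis on $\eta$ secures through Proposition \ref{prop-whitt-graded}.
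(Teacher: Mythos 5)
Your argument is correct and follows exactly the route of the paper's own proof: irreducibility of $S_{\eta,a}$ from the hypothesis on $\eta$ (via Proposition \ref{prop-whitt-graded}), irreducibility of $M\otimes S_{\eta,a}$ from the third item of Proposition \ref{prop-constr-irr}, and then Theorem \ref{thm-main}. The extra bookkeeping you supply on matching the two formulations of the nonvanishing condition is a welcome clarification but does not change the argument.
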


\begin{proof}
The module $S_{\eta, a}$ is irreducible by the assumption on $\eta$. Then $M\otimes S_{\eta, a}$ is irreducible by  Proposition \ref{prop-constr-irr}. It remains to apply Theorem \ref{thm-main}. 
\end{proof}

\begin{remark}
All results hold for more general parabolic subalgebras with infinite dimensional Levi factors that contain a proper subalgebra of the Heisenberg algebra. All proofs remain valid in this case and hence we obtain more 
families of irreducible modules for Affine Lie algebras.  
\end{remark}

\medskip

\section*{Acknowledgments}

M.C.\,C. is supported in part by the Fapesp (2019/24494-9).
V.\,F.\ is supported in part by the CNPq (304467/2017-0) and by the Fapesp (2018/23690-6). The authors are grateful to Lucas Calixto and Henrique Rocha for useful discussions and comments.



\end{document}